
\documentclass[11 pt]{article}

\usepackage[utf8]{inputenc}
\usepackage[english]{babel}   
\usepackage{graphicx}
\usepackage{lmodern}
\usepackage{amsmath}
\usepackage{tikz}
\usetikzlibrary{calc}
\usepackage{longtable}
\usepackage{epsfig}
\usepackage{color}
\usepackage{setspace}
\usepackage[top=2cm,bottom=2.5cm,left=2.5cm,right=2.5cm]{geometry}
\usepackage{sectsty}
\usepackage{fancyhdr}
\usepackage{wasysym}
\usepackage[colorlinks=true]{hyperref}
\usepackage{caption}
\usepackage{subcaption}
\captionsetup{figurewithin=none}
\captionsetup{tablewithin=none}
\usetikzlibrary{patterns}
\usepackage{amsfonts}
\usepackage{amssymb}
\usepackage{amsthm}

\setlength{\headheight}{41 pt}
\setlength{\headsep}{20 pt}
\setlength{\footskip}{30 pt}

\pagestyle{fancy}

\fancyhead[L]{~}
\fancyhead[C]{~}
\fancyfoot[L]{~}
\fancyfoot[C]{\thepage}
\fancyfoot[R]{~}

\newtheorem{theorem}{Theorem} 

\newtheorem{lemma}[theorem]{Lemma}
\newtheorem{proposition}[theorem]{Proposition}
\newtheorem{remark}[theorem]{Remark}
\newtheorem{hyp}[theorem]{Hypothesis}
\newtheorem{coro}[theorem]{Corollary}

\newcommand{\G}{\mathcal{G}}

\graphicspath{{pics/}}

\title{Computation of Miura surfaces with gradient Dirichlet boundary conditions}
\date{}
\author{\begin{minipage}{\textwidth}\centering
		Frédéric Marazzato \\
		\small{Department of Mathematics, The University of Arizona, Tucson, AZ 85721-0089, USA}\\
   \small{email: \texttt{marazzato@arizona.edu}\\}
   \end{minipage}
   }
      
   
\begin{document}
\hypersetup{urlcolor=blue,linkcolor=red,citecolor=blue}

\maketitle

\begin{abstract}
Miura surfaces are the solutions of a constrained nonlinear elliptic system of equations.
This system is derived by homogenization from the Miura fold, which is a type of origami fold with multiple applications in engineering.
A previous inquiry, gave suboptimal conditions for existence of solutions and proposed an $H^2$-conformal finite element method to approximate them.
In this paper, the existence of Miura surfaces is studied using a gradient formulation.
It is also proved that, under some hypotheses, the constraints propagate from the boundary to the interior of the domain.
Then, a numerical method based on a stabilized least-square formulation, conforming finite elements and a Newton method is introduced to approximate Miura surfaces.
The numerical method is proved to converge and numerical tests are performed to demonstrate its robustness.
\end{abstract}

\textit{Keywords: Origami, nonlinear elliptic equation, Kinematics of deformation.}

\textit{AMS Subject Classification: 35J66, 65N12, 65N15, 65N30 and 74A05.}

\section{Introduction}
The ancient art of origami has attracted a lot of attention in recent years.
The monograph \cite{lang2017twists} presents a very complete description of many different types of origami patterns as well as some questions surrounding them.
Origami have found many applications in engineering.
Origami folds are, for instance, used to ensure that airbags inflate correctly from their folded state \cite{badagavi2017use}.
Applications in aerospace engineering are as varied as radiators, panels for radio telescopes \cite{morgan2016approach} or soft robots \cite{rafsanjani2019programming}.
More recently, origami have been studied with a view to produce metamaterials \cite{overvelde2016three,boatti2017origami,wickeler2020novel}.
This article focuses on the Miura fold, or Miura ori, which was introduced in \cite{miura1969proposition}.
The Miura ori is flat when completely unfolded and can be fully folded into a very compact form. 
However, the Miura fold can also assume different shapes when partially unfolded.
\cite{schenk2013geometry} and \cite{wei2013geometric} provided a description of these partially unfolded states through a computation of the Poisson ratio of the Miura fold, which happens to be negative for in-plane deformations and positive for out-of-plane bending.
Miura folds are therefore globally saddle shaped.
A homogenization procedure for origami folds was proposed in \cite{nassar2017macroscopic} and \cite{nassar2017curvature} and then applied to the Miura fold in \cite{lebee2018fitting}.
The authors obtained a constrained nonlinear elliptic equation describing the limit surface, which is called a Miura surface.
The constraints are both equality and inequality constraints.
In \cite{marazzato}, existence and uniqueness of solutions was proved but only for the unconstrained equation and under restrictive assumptions.
\cite{marazzato} also proposed an $H^2$-conformal Finite Element Method (FEM) to compute solutions to the problem, which is computationally involved.

In this paper, the existence of solutions to the constrained equation is proved under adequate boundary conditions.
Thereafter, a stabilized least-squares formulation is proposed, coupled to a Newton method and $\mathbb{P}^1$--Lagrange finite elements to approximate the solutions.
The robustness of the method is demonstrated on several test cases.

\section{Continuous equations}
\label{sec:continuous}
\subsection{Modeling of the Miura fold}
The Miura fold is based on the reference cell sketched in Figure \ref{fig:Miura cell}, in which all edges have unit length.
\begin{figure}
\centering
\begin{tikzpicture} [scale=1.5]
\pgfmathsetmacro{\r}{1}
\coordinate (a) at (0,0);
\coordinate (b) at (\r,0);
\coordinate (c) at (2*\r,0);
\coordinate (d) at \r*(0.5,{sqrt(2)/2});
\coordinate (e) at ($(d)+(\r,0)$);
\coordinate (f) at ($(e)+(\r,0)$);
\coordinate (g) at \r*(0.5,{-sqrt(2)/2});
\coordinate (h) at ($(g)+(\r,0)$);
\coordinate (i) at ($(h)+(\r,0)$);

\draw[-] (a) -- (b);
\draw[-] (c) -- (b);
\draw[-] (a) -- (d);
\draw[-] (e) -- (d);
\draw[-] (e) -- (f);
\draw[-] (c) -- (f);
\draw[-] (b) -- (e);
\draw[-] (c) -- (i);
\draw[-] (a) -- (g);
\draw[-] (g) -- (h);
\draw[-] (h) -- (i);
\draw[-] (b) -- (h);

\draw[-,dashed] (d) -- (b);
\draw[-,dashed] (e) -- (c);
\draw[-,dashed] (c) -- (h);
\draw[-,dashed] (g) -- (b);

\end{tikzpicture}
\caption{Miura reference cell.}
\label{fig:Miura cell}
\end{figure}
The reference cell is made of four parallelograms and can be folded along the full lines in Figure \ref{fig:Miura cell}.
Note that the cell's parallelograms are not allowed to stretch.
Following \cite{wei2013geometric,schenk2013geometry}, we consider that the {parallelograms} can also bend along the dashed lines, which at order one, is akin to folding along them.
We restrict the deformations of the Miura cell to folding along crease lines or bending in the facets.
A Miura tessellation is based on the continuous juxtaposition of reference cells, dilated by a factor $r > 0$.
In the spirit of homogenization, \cite{nassar2017curvature,lebee2018fitting} have proposed a procedure to compute a surface that is the limit when $r \to 0$ of a Miura tessellation.
This procedure leads to the constrained PDE presented in the next section.

\subsection{Strong form equations}
Let $\Omega \subset \mathbb{R}^2$ be a bounded convex polygon that can be perfectly fitted by triangular meshes.
Note that, due to the convexity hypothesis, the boundary $\partial \Omega$ is Lipschitz \cite{grisvard2011elliptic}.
We denote by $n \in \mathbb{R}^2$ the unit normal to $\partial \Omega$.
Let $\varphi : \Omega \subset \mathbb{R}^2 \rightarrow \mathbb{R}^3$ be a parametrization of the homogenized surface constructed from a Miura tessellation.
As shown in \cite{lebee2018fitting}, $\varphi$ is a solution of the following strong form equation:
\begin{subequations}
\label{eq:strong form equations} 
\begin{equation}
\label{eq:min surface eq}
p(\varphi_x) \varphi_{xx} + q(\varphi_y) \varphi_{yy} = 0 \in \mathbb{R}^3,
\end{equation}
\begin{equation}
\label{eq:microstructure}
\left(1 - \frac14 |\varphi_x|^2 \right) |\varphi_y|^2 = 1,
\end{equation}
\begin{equation}
\label{eq:local basis}
\varphi_{x} \cdot \varphi_{y}  = 0,
\end{equation}
\begin{equation}
\label{eq:ineq constraint}
0 < |\varphi_{x}|^2 \le 3, \quad 1 <  |\varphi_{y}|^2 \le 4,
\end{equation}
\end{subequations}
where 
\[p(\varphi_x) = \frac4{4 - |\varphi_{x}|^2}, \quad q(\varphi_y) = \frac4{|\varphi_{y}|^2}, \]
and the subscripts $x$ and $y$ stand respectively for $\partial_x$ and $\partial_y$.
\eqref{eq:min surface eq} is a nonlinear elliptic equation that describes the fact that the in-plane and out-of-plane Poisson ratios for the bending mode of the Miura fold are equal, see \cite{schenk2013geometry,wei2013geometric}.
\eqref{eq:microstructure} and \eqref{eq:local basis} are equality constraints that stem from the description of the metric tensor of a Miura surface,
see \cite{lebee2018fitting}.
Finally, \eqref{eq:ineq constraint} is enforced to prevent the metric tensor of the surface $\varphi(\Omega)$ from being singular.
In practice, that means that, locally, when the bounds are reached, the pattern is fully folded or fully unfolded.
Note that the constraint $1 < |\varphi_y|^2$ in \eqref{eq:ineq constraint} is particularly challenging as it is non-convex.

The paper \cite{marazzato} proved that there exists solutions to \eqref{eq:min surface eq} but it also showed that some solutions of \eqref{eq:min surface eq} do not verify the constraints \eqref{eq:microstructure}-\eqref{eq:ineq constraint}.
As stated in \cite{lebee2018fitting}, these constraints are necessary to construct a Miura surface.
The main result of this paper is Theorem \ref{th:general} which proves the existence of solutions to \eqref{eq:strong form equations} under some hypothesis and appropriate boundary conditions described in the following.

\subsection{Continuous setting}
We introduce the Hilbert space $V:=\left(H^2(\Omega)\right)^3$, equipped with the usual $\left(H^2(\Omega)\right)^3$ Sobolev norm.
Note that due to Rellich--Kondrachov theorem \cite[Theorem 9.16]{brezis}, $V \subset \left(\mathcal{C}^0(\bar{\Omega}) \right)^3$.
For $\varphi \in V$, let $\mathcal{A}(\varphi):V \mapsto L^2(\Omega)^3$, be the operator defined for $\psi \in V$ as
\begin{equation}
\label{eq:inner operator}
\mathcal{A}(\varphi) \psi := p(\varphi_x) \psi_{xx} + q(\varphi_y) \psi_{yy} \in \mathbb{R}^3.
\end{equation}
Solving Equation \eqref{eq:min surface eq} consists in finding $\varphi \in V$ such that
\begin{equation}
\label{eq:nonlinear operator eq}
\mathcal{A}(\varphi)\varphi = 0 \text{ in } \Omega.
\end{equation}
The operator $\mathcal{A}(\varphi)$ is not well adapted to obtain a constrained solution of \eqref{eq:min surface eq}, see \cite{marazzato}.
Equation \eqref{eq:nonlinear operator eq} is thus reformulated into an equation on $\mathcal{G} \equiv \nabla \varphi$.
Let $W := H^1(\Omega)^{3 \times 2}$ equipped with the usual $H^1(\Omega)$ norm, and $W_0 := H^1_0(\Omega)^{3 \times 2}$.
For $G \in W$, we write $G=(G^x,G^y)$, where $G^x,G^y \in \mathbb{R}^3$.
Let $A(\mathcal{G}) : W \mapsto L^2(\Omega)^3$ be the operator such that, for $G \in W$,
\begin{equation*}
A(\mathcal{G}) G := 
p(\mathcal{G}^x) G^x_{x} + q(\mathcal{G}^y) G^y_{y}. \\
\end{equation*}
Note that the operator $A(\G)$ is not uniformly elliptic.
We want to work with a uniformly elliptic operator in order to verify the Cordes condition, see \cite{smears,maugeri2000elliptic}.
As Equation \eqref{eq:ineq constraint} indicates that we are not interested in the values of $\nabla \varphi$ when $|\varphi_x| > 3$, $|\varphi_y| > 4$ and $|\varphi_y| \le 1$, we define the Lipschitz cut-offs,
\[ \bar{p}(\mathcal{G}^x) := \left\{ \begin{array}{cc} 4 & \text{if } |\mathcal{G}^x|^2 \geq 3 \\
p(\mathcal{G}^x) & \text{otherwise} \\
 \end{array} \right., \quad
 \bar{q}(\mathcal{G}^y) :=\left\{ \begin{array}{cc} 4 & \text{if } |\mathcal{G}^y|^2 \le 1 \\
 1 & \text{if } |\mathcal{G}^y|^2 \ge 4 \\
q(\mathcal{G}^y) & \text{otherwise} \\
 \end{array} \right. . \]
Therefore, $1 \le \bar{p}(\mathcal{G}^x) \le 4$ and $1 \le \bar{q}(\mathcal{G}^y) \le 4$ and $\bar{p}$ and $\bar{q}$ are $K$-Lipschitz functions, where $K>0$. 
Note that when \eqref{eq:ineq constraint} is verified, then $\bar{p}(\varphi_x) = p(\varphi_x)$ and $\bar{q}(\varphi_y) = q(\varphi_y)$.
The operator $\bar{A}(\mathcal{G})$ is defined for $G \in W$ as,
\begin{equation}
\label{eq:grad operator}
\bar{A}(\mathcal{G}) G := 
\bar{p}(\mathcal{G}^x) G^x_{x} + \bar{q}(\mathcal{G}^y) G^y_{y}. 
\end{equation}
The operator $\bar{A}(\G)$ is uniformly elliptic and thus verifies the Cordes condition.

\subsection{Gradient boundary conditions}

As the new variable $\mathcal{G}$ is assumed to be a gradient, it should verify a generalization of Schwarz's theorem, which states that for a distribution $f$, $f_{xy} = f_{yx}$ in the sense of distributions.
Therefore, we define $\mathbb{W} := \{G \in W | G^x_y = G^y_x \text{ a.e.~in } \Omega \}$, equipped with the usual $H^1(\Omega)$ norm, and the gradient Dirichlet boundary conditions should be in the trace of $\mathbb{W}$.
Let $\mathfrak{T}:H^1(\Omega)^{3 \times 2} \to H^\frac12(\partial \Omega)^{3 \times 2}$ be the trace operator, see \cite[Theorem B.52]{ern_guermond} for instance.
Let $\mathbb{H}^\frac12(\partial \Omega)^{3 \times 2} := \mathfrak{T} \mathbb{W}$ be a trace space,
equipped with the usual $H^\frac12(\partial \Omega)$ norm.

\begin{lemma}[Characterization of $\mathbb{H}^\frac12(\partial \Omega)$]
One has the following characterization:
\[ \mathbb{H}^\frac12(\partial \Omega)^{3 \times 2} = \left\{ \begin{pmatrix}
g_1^\mathsf{T} \\
g_2^\mathsf{T} \\
g_3^\mathsf{T} \\
\end{pmatrix} \, ; \, \forall i \in \{1,2,3\}, \,  g_i \in H^\frac12(\partial \Omega)^2 \text{ and } \int_{\partial \Omega} g_i \cdot \tau = 0 \right\} \]
where $\tau$ is a tangent vector to $\partial \Omega$.
\end{lemma}

\begin{proof}
Let us begin by showing the direct inclusion.
Let $G_D \in \mathbb{H}^\frac12(\partial \Omega)^{3 \times 2}$
As it is a trace space, there exists $\hat{G}_D \in H^1(\Omega)^{3 \times 2}$, $\hat{G}_D = G_D$ on $\partial \Omega$ and $\hat{G}^x_{D,y} = \hat{G}^y_{D,x}$ a.e. in $\Omega$.
Note that $0 = \hat{G}^y_{D,x} - \hat{G}^x_{D,y} = \mathrm{div}(\epsilon \hat{G}_D)$, where $\epsilon = \begin{pmatrix}
0 & 1 \\ -1 & 0 \\ \end{pmatrix}$.
Applying the divergence theorem, one has
\[ 0 = \int_\Omega \mathrm{div}(\epsilon \hat{G}_D) = \int_{\partial \Omega} \epsilon \hat{G}_D \cdot n = \int_{\partial \Omega}  G_D \cdot \tau, \] 
where $\tau = \epsilon^\mathsf{T} n$ is a tangent vector to $\partial \Omega$.
This proves the direct inclusion.

Let us now show the reverse inclusion.
For $i \in \{1,\ldots,3\}$, we consider $g_i \in H^\frac12(\partial \Omega)^2$ such that $\int_{\partial \Omega} g_i \cdot \tau = 0$.
We define $\tilde{g}_i := \epsilon g_i$.
One thus has $\int_{\partial \Omega} \tilde{g}_i \cdot n = 0$.
Let $i=1,2,3$.
We apply \cite[Lemma~2.2]{girault1979finite} and get the existence of $\tilde{u}_i \in H^1(\Omega)^2$ such that $\mathrm{div}(\tilde{u}_i) = 0$ a.e. in $\Omega$ and $\tilde{u}_i = \tilde{g}_i$ on $\partial \Omega$.
There also exists $C>0$, depending only on $\Omega$, such that
\begin{equation}
\label{eq:trace ineq}
\|\tilde{u}_i \|_{H^1(\Omega)} \le C \|\tilde{g}_i \|_{H^\frac12(\partial \Omega)}.
\end{equation}
We define $u_i := \epsilon \tilde{u}_i$.
Therefore, one has $u^x_{i,y} - u^y_{i,x} = 0$ a.e. in $\Omega$.
Defining $G := (u_i^\mathsf{T})_{i=1,2,3}$ and $G_D := (g_i^\mathsf{T})_{i=1,2,3}$, one has $G \in \mathbb H^1(\Omega)^{3 \times 2}$, $G_D \in H^\frac12(\partial \Omega)^{3 \times 2}$ and $G = G_D$ on $\partial \Omega$.
Using \eqref{eq:trace ineq} for each component, one gets
\[ \|G \|_{H^1(\Omega)} \le C \|G_D \|_{H^\frac12(\partial \Omega)}. \]
Noticing that $G^x_y - G^y_x = 0$ a.e. in $\Omega$, one can conclude that $G_D \in \mathfrak{T} \mathbb H^1(\Omega)^{3 \times 2} = \mathbb H^\frac12(\partial \Omega)^{3 \times 2}$.
\end{proof}

In the rest of this paper, we consider gradient Dirichlet boundary conditions $G_D$ that verify Hypothesis \ref{hyp}.
\begin{hyp}
\label{hyp}
Let $G_D \in \mathbb{H}^\frac12(\partial \Omega)^{3 \times 2}$ such that, a.e. on $\partial \Omega$,
\[ \left\{ \begin{aligned}
&0 < |G_D^x|^2 \le 3, \\
& |G_D^y|^2 = \frac4{4 - |G_D^x|^2} \le 4, \\
& G_D^x \cdot G_D^y = 0.
\end{aligned} \right.  \]
\end{hyp}
Note that, the subspace of $\mathbb{H}^\frac12(\partial \Omega)^{3 \times 2}$ of functions that verify Hypothesis \ref{hyp} is not empty.
Section \ref{sec:num examp} gives some examples of such functions.
We consider the convex subsets $W_D := \{G \in W \ |\ G = G_D \text{ on } \partial \Omega\}$, and $\mathbb{W}_D := \{G \in \mathbb{W} \ |\ G = G_D \text{ on } \partial \Omega\}$.

We finally want to find $\mathcal{G} \in \mathbb{W}_D$, such that
\begin{equation}
\label{eq:gradient eq}
\bar{A}(\mathcal{G}) \mathcal{G} = 0 \text{ a.e. in } \Omega.
\end{equation}
We propose to first study a linear equation where the coefficient of the operator $\bar{A}(\G)$ is frozen.
In a second time, we propose to use a fix point theorem, and the bounds obtained from studying the linear problem, to conclude the existence of solutions to \eqref{eq:gradient eq}.

\subsection{Linearized problem}
Let $\mathcal{G} \in W$.
The linearized equation we want to solve is thus, find $G \in \mathbb{W}_D$, 
\begin{equation}
\label{eq:first order inside}
\bar{A}(\mathcal{G}) G = 0 \text{ a.e. in } \Omega.
\end{equation}
To study the well-posedness of \eqref{eq:first order inside}, we write an appropriate mixed formulation, following \cite{gallistl2017variational}.
Let $R:= \{r \in L^2(\Omega)^3 | \int_\Omega r = 0 \}$, equipped with the usual $L^2$ norm.
Let $G \in W$.
We define the bilinear form, for all $\tilde{G} \in W_0$,
\begin{equation*}
a(\G;G,\tilde{G}) := \int_\Omega \bar{A}(\G) G \cdot \bar{A}(\G) \tilde{G},
\end{equation*}
and for all $\tilde{r} \in R$,
\begin{equation*}
b(G,\tilde{r}) = \int_\Omega (G^x_y - G^y_x) \cdot \tilde{r}.
\end{equation*}
The mixed formulation of \eqref{eq:first order inside} consists in seeking $(G,r) \in W_D \times R$,
\begin{subequations}
\label{eq:mixed continuous}
\begin{gather}
a(\G;G,\tilde{G}) + b(\tilde{G},r) = 0, \quad \forall \tilde{G} \in W_0 , \\
b(G,\tilde{r}) = 0, \quad \forall \tilde{r} \in R.
\end{gather}
\end{subequations}
Equation (\ref{eq:mixed continuous}b) will be shown in Lemma \ref{th:first order} to imply that $G$ verifies Clairault's theorem.

\begin{lemma}[Coercivity]
\label{th:coercivity}
Let $\gamma(\G)$ such that 
\[ \gamma(\G) := \frac{\bar{p}(\G^x) + \bar{q}(\G^y)}{\bar{p}(\G^x)^2 + \bar{q}(\G^y)^2}. \]
One has $\gamma_0 := \frac1{16} \le \gamma(\G) \le 4 =: \gamma_1$.
There exists $\varepsilon \in (0,1]$, independent of $\gamma(\G)$,
for all $G \in W_0$, such that $G^x_y = G^y_x$,
\[ \left(\frac{1 - \sqrt{1 - \varepsilon}}{\gamma_1} \right)^2 \Vert \nabla G \Vert_{L^2(\Omega)}^2 \le a(\G; G,G). \]
The coercivity constant is independent of $\G$.
\end{lemma}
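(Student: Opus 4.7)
The plan is to follow the Cordes-condition framework of Smears--Süli as adapted by Gallistl to the mixed/gradient setting. The two-sided bound on $\gamma(\G)$ is immediate from $\bar p, \bar q \in [1,4]$: the numerator lies in $[2,8]$ and the denominator in $[2,32]$, which gives $1/16 \le \gamma(\G) \le 4$ pointwise.

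The core ingredient is a pointwise Cordes-type inequality: there exists $\varepsilon \in (0,1]$, independent of $\G$, such that
\[ (\gamma(\G)\bar p(\G^x) - 1)^2 + (\gamma(\G)\bar q(\G^y) - 1)^2 \le 1 - \varepsilon \quad \text{a.e.~in } \Omega. \]
Using $\gamma (\bar p^2 + \bar q^2) = \bar p + \bar q$, the left-hand side simplifies to $2 - (\bar p + \bar q)^2/(\bar p^2 + \bar q^2)$, so the inequality is equivalent to $2\bar p \bar q/(\bar p^2 + \bar q^2) \ge \varepsilon$. Because $(\bar p, \bar q) \in [1,4]^2$, this ratio is bounded below by an explicit constant (the worst case being $(\bar p, \bar q) = (1,4)$, which gives $8/17$), so a uniform $\varepsilon$ exists. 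Cauchy--Schwarz applied componentwise upgrades this to the functional estimate
\[ \Vert \gamma(\G)\,\bar A(\G) G - (G^x_x + G^y_y)\Vert_{L^2(\Omega)} \le \sqrt{1-\varepsilon}\, \Vert \nabla G \Vert_{L^2(\Omega)}. \]

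The second ingredient is a Miranda--Talenti-type identity: for $G \in W_0$ with $G^x_y = G^y_x$, I would establish
\[ \Vert G^x_x + G^y_y \Vert_{L^2(\Omega)} = \Vert \nabla G \Vert_{L^2(\Omega)}. \]
Expanding the square produces a cross term $2\int_\Omega G^x_x \cdot G^y_y$; two integrations by parts on a smooth approximating sequence, with boundary terms vanishing because $G \in H^1_0$, give the weak identity $\int_\Omega G^x_x \cdot G^y_y = \int_\Omega G^x_y \cdot G^y_x$, which by the Clairault-type constraint equals $\Vert G^x_y \Vert_{L^2}^2 = \tfrac12(\Vert G^x_y\Vert_{L^2}^2 + \Vert G^y_x \Vert_{L^2}^2)$, yielding the identity. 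Combined with the pointwise Cordes estimate, the triangle inequality, and $\gamma \le \gamma_1$, we obtain
\[ \Vert \nabla G \Vert_{L^2} = \Vert G^x_x + G^y_y \Vert_{L^2} \le \Vert \gamma \bar A G \Vert_{L^2} + \sqrt{1-\varepsilon}\,\Vert \nabla G \Vert_{L^2} \le \gamma_1 \Vert \bar A G\Vert_{L^2} + \sqrt{1-\varepsilon}\,\Vert \nabla G \Vert_{L^2}, \]
and rearranging then squaring delivers the claim, since $a(\G;G,G) = \Vert \bar A(\G) G \Vert_{L^2}^2$.

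The main obstacle is ensuring that $\varepsilon$ can be chosen independent of $\G$; this is precisely why the Lipschitz truncations $\bar p, \bar q$ were introduced, since without them the ratio $\bar p / \bar q$ could degenerate and the Cordes condition would fail. A second subtle point is the Miranda--Talenti step: in the classical scalar non-divergence setting it requires convexity of $\Omega$, but working at the gradient level with the Clairault constraint promotes it to an equality without any convexity input, so the polygonal/Lipschitz assumption on $\Omega$ is only used to justify integration by parts in $H^1_0$.
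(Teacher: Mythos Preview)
Your proposal is correct and follows essentially the same route as the paper: verify the Cordes condition for the truncated coefficients $(\bar p,\bar q)\in[1,4]^2$, combine it with a Miranda--Talenti/div--curl identity for $G\in W_0$, and rearrange via the triangle inequality. The paper compresses your first step into a citation of Smears and Gilbarg--Trudinger and phrases your second step as the inequality $\Vert\nabla G\Vert_{L^2}^2 \le \Vert G^x_y-G^y_x\Vert_{L^2}^2 + \Vert\mathrm{div}\,G\Vert_{L^2}^2$ (citing Costabel), whereas you compute an explicit $\varepsilon$ and prove the identity $\Vert G^x_x+G^y_y\Vert_{L^2}=\Vert\nabla G\Vert_{L^2}$ directly by integration by parts; the arguments are otherwise the same.
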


\begin{proof}
Following \cite{gilbarg2015elliptic,smears}, as $\bar{A}(\G)$ verifies the Cordes condition, one can prove that there exists $\varepsilon \in (0,1]$, independent of $\G$, such that
\[ \Vert \gamma(\G) \bar{A}(\G) G \Vert_{L^2(\Omega)} \ge (1 - \sqrt{1 - \varepsilon}) \Vert \mathrm{div}(G) \Vert_{L^2(\Omega)} . \]
One also has, for all $G \in W_0$, 
\[ \Vert \nabla G \Vert^2_{L^2(\Omega)} \le \Vert G^x_y - G^y_x \Vert^2_{L^2(\Omega)} + \Vert \mathrm{div}(G) \Vert^2_{L^2(\Omega)}, \]
as shown in \cite[Theorem 2.3]{costabel1999maxwell}.
Therefore, for all $G \in W_0$ such that $G^x_y = G^y_x$, a.e.~in $\Omega$,
\[ \Vert \gamma(\G) \bar{A}(\G) G \Vert_{L^2(\Omega)} \ge (1 - \sqrt{1 - \varepsilon}) \Vert \nabla G \Vert_{L^2(\Omega)}. \]
Finally, one has
\[ (1 - \sqrt{1 - \varepsilon})^2 \Vert \nabla G \Vert_{L^2(\Omega)}^2 \le \gamma_1^2  \Vert \bar{A}(\G) G \Vert_{L^2(\Omega)}^2. \]

\end{proof}

\begin{lemma}
\label{th:first order}
Equation \eqref{eq:first order inside} admits a unique solution $G \in \mathbb{W}_D$.
There exists $C>0$, independent from $\G$,
\[ \Vert G \Vert_{H^1(\Omega)} \le C \Vert G_D \Vert_{H^\frac12 (\partial \Omega)}.  \]
\end{lemma}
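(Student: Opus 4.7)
The plan is to recast \eqref{eq:mixed continuous} as a saddle-point problem on $W_0 \times R$ (with homogeneous boundary conditions) and then to apply the standard Brezzi theory. Since $G_D \in \mathbb{H}^{\frac32}(\partial\Omega)^{3\times 2} \subset \mathfrak{T}\mathbb{W}$, I first pick a lift $R_{G_D} \in \mathbb{W}$ of $G_D$ with $\Vert R_{G_D}\Vert_{H^1(\Omega)} \le C_{\mathrm{tr}} \Vert G_D\Vert_{H^{\frac32}(\partial\Omega)}$. Because $R_{G_D}\in\mathbb W$, Clairault's identity $(R_{G_D})^x_y=(R_{G_D})^y_x$ holds, so $b(R_{G_D},\tilde r)=0$ for every $\tilde r\in R$. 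Writing $G = G_0 + R_{G_D}$ with $G_0\in W_0$, \eqref{eq:mixed continuous} becomes the standard saddle-point problem of finding $(G_0,r)\in W_0\times R$ such that $a(\G;G_0,\tilde G)+b(\tilde G,r)=-a(\G;R_{G_D},\tilde G)$ and $b(G_0,\tilde r)=0$, where the right-hand side is a bounded linear functional with norm controlled by $\Vert G_D\Vert_{H^{\frac32}(\partial\Omega)}$.

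Next I would check the four Brezzi hypotheses. Continuity of $a$ follows from $\bar p,\bar q\in[1,4]$, and continuity of $b$ is immediate from Cauchy--Schwarz. For coercivity on $\ker b\cap W_0$, I observe that $b(G_0,\tilde r)=0$ for every $\tilde r\in R$ forces $G_0^x_y-G_0^y_x$ to be constant in each component, and since $G_0\in H^1_0$ this constant vanishes by integration by parts. Hence $\ker b\cap W_0\subset\mathbb W_0$, and Lemma \ref{th:coercivity} applies directly, giving coercivity with a constant independent of $\G$. The main work is the inf-sup condition for $b$ on $W_0\times R$: given $r\in R$, I would construct, componentwise in $k=1,2,3$, a pair $(u^k,v^k)\in H^1_0(\Omega)^2$ with $u^k_y-v^k_x = r^k$ and $\Vert(u^k,v^k)\Vert_{H^1}\le C\Vert r^k\Vert_{L^2}$. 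This is obtained from the classical Bogovski\u\i\ right inverse for the divergence on a Lipschitz domain (applicable since $\partial\Omega$ is Lipschitz by convexity), composed with the rotation $(w_1,w_2)\mapsto(w_2,-w_1)$. Setting $G^{x,k}=u^k$ and $G^{y,k}=v^k$ yields $G\in W_0$ with $b(G,r)\ge\beta^{-1}\Vert r\Vert_{L^2}^2$ and $\Vert G\Vert_{H^1}\le\beta^{-1}\Vert r\Vert_{L^2}$, which is exactly the required LBB condition with a constant $\beta$ depending only on $\Omega$.

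With the four Brezzi conditions at hand, existence and uniqueness of $(G_0,r)\in W_0\times R$ follow, together with the a priori estimate
\[ \Vert G_0\Vert_{H^1(\Omega)}+\Vert r\Vert_{L^2(\Omega)} \le C\bigl(\Vert a(\G;R_{G_D},\cdot)\Vert_{W_0'} + \Vert b(R_{G_D},\cdot)\Vert_{R'}\bigr) \le C\Vert R_{G_D}\Vert_{H^1(\Omega)}, \]
where $C$ depends only on the coercivity constant of Lemma \ref{th:coercivity}, the inf-sup constant of $b$, and the continuity bounds of $a$ and $b$, all of which are independent of $\G$. Undoing the lift gives $G=G_0+R_{G_D}\in W_D$ with $\Vert G\Vert_{H^1(\Omega)}\le C\Vert G_D\Vert_{H^{\frac32}(\partial\Omega)}$, and the $\G$-independence of the constant is preserved.

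The main obstacle is the inf-sup condition, since it requires building a right inverse of the two-dimensional curl into $H^1_0$, which is not a mere consequence of the trace theorem; invoking Bogovski\u\i's construction (or equivalently solving an auxiliary Stokes-type problem) is the key technical input, and it is at this step that the convexity/Lipschitz regularity of $\Omega$ and the zero-mean normalisation built into $R$ are both essential.
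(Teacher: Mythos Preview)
Your proof is correct and follows essentially the same strategy as the paper: reduce to a homogeneous saddle-point problem on $W_0\times R$, invoke the coercivity of $a(\G;\cdot,\cdot)$ on $\ker b\cap W_0$ from Lemma~\ref{th:coercivity}, and pair it with the inf-sup condition for $b$ to apply Brezzi/BNB theory. The only cosmetic differences are that the paper takes an $H^2$ lift of $G_D$ and cites \cite{gallistl2017variational} for the inf-sup, whereas you lift directly into $\mathbb{W}$ (which cleanly kills the constraint term) and supply the inf-sup explicitly via a Bogovski\u\i/rotated-divergence construction; neither changes the substance of the argument.
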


\begin{proof}
As $G_D \in \mathbb{H}^{\frac12}(\partial \Omega)^{3 \times 2}$, there exists $\hat{G}_D \in H^1(\Omega)^{3 \times 2}$, $\hat{G}_D = G_D$ on $\partial \Omega$, and there exists $C>0$, independent from $\G$,
\[ \Vert \hat{G}_D \Vert_{H^1(\Omega)} \le C \Vert G_D \Vert_{H^\frac12(\partial \Omega)}. \]
Also, note that, as $G_D \in \mathbb{H}^\frac12(\partial \Omega)$, then $\hat{G}_D \in \mathbb{W}$.
We define
\[f := -\bar{A}(\mathcal{G}) \hat{G}_D \in L^2(\Omega)^3.\]
We seek $(\hat{G},r) \in W_0 \times R$,
\[ \left\{ \begin{aligned}
& a(\G;\hat{G},\tilde{G}) + b(\tilde{G},r) = \int_\Omega f \cdot \bar{A}(\G)\tilde{G}, \quad \forall \tilde{G} \in W_0, \\
& b(\hat{G},\tilde{r}) = -b(\hat{G}_D,\tilde{r}) = 0 , \quad \forall \tilde{r} \in R, \\
\end{aligned} \right. \]
as $\hat{G}^x_{D,y} = \hat{G}^y_{D,x}$ a.e. in $\Omega$.

We define $G := \hat{G} + \hat{G}_D$, which will be a solution of \eqref{eq:first order inside}.
We use the BNB lemma, see \cite[Theorem 2.6, p.~85]{ern_guermond}, in the context of saddle point problems \cite[Theorem 2.34, p.~100]{ern_guermond}.
As shown in \cite{gallistl2017variational}, there exists $\beta > 0$, independent of $\G$,
\[ \beta \le \inf_{\tilde{r} \in R\setminus \{0\}} \sup_{\hat{G} \in W_0\setminus \{0\}} \frac{b(\hat{G},\tilde{r})}{\Vert\tilde{r} \Vert_{R} \Vert \hat{G} \Vert_{W_0}}. \]
$a(\G)$ was proved to be coercive over $\mathbb{W}_{0}$ in Lemma \ref{th:coercivity}, thus fulfilling the two conditions of the BNB theorem.
Therefore, \eqref{eq:mixed continuous} admits a unique solution.
Applying the BNB theorem and a trace inequality gives
\[ \Vert G \Vert_{H^1(\Omega)} \le \Vert \hat{G}_D \Vert_{H^1(\Omega)} + C \Vert \bar{A}(\G) \hat{G}_D \Vert_{L^2(\Omega)} \le C' \Vert G_D \Vert_{H^\frac12(\partial \Omega)}, \]
where $C,C'>0$ are independent of $\G$.

Let us now show that a solution of \eqref{eq:mixed continuous} verifies \eqref{eq:first order inside}.
As $\hat{G} \in W_0$, and for all $\tilde{r} \in R$, $b(\hat{G},\tilde{r}) = 0$, following \cite{gallistl2017stable}, one has $\hat{G}^x_y = \hat{G}^y_x$ a.e. in $\Omega$.
Therefore, as $\hat{G}_D \in \mathbb{W}_D$, then $G \in \mathbb{W}_D$ and verifies $G^x_y = G^y_x$ a.e. in $\Omega$.
Testing (\ref{eq:mixed continuous}a) with $\bar{A}(\G) \hat{G}$, one has
\[ \int_\Omega \bar{A}(\G)\hat{G} \cdot \bar{A}(\G)\hat{G} = \int_\Omega f \cdot \bar{A}(\G)\hat{G}. \]
Therefore, $\hat{G}$ minimizes $\Vert \bar{A}(\G) \tilde{G} - f \Vert_{L^2(\Omega)}^2$, over $\tilde{G} \in \mathbb W_0$.
We note that Lemma \ref{th:coercivity} proved that $\bar{A}(\G)$ is coercive over $\mathbb W_0$ and therefore surjective from $\mathbb W_0$ onto $L^2(\Omega)$.
We can now conclude that $G$ solves \eqref{eq:first order inside}.
\end{proof}

\subsection{Nonlinear problem}
\label{sec:fixed point}
The existence of a solution to \eqref{eq:gradient eq} is proved through a fixed point method.
Let $T: H^1(\Omega)^{3 \times 2} \ni \G \mapsto G(\G) \in \mathbb{W}_D$ be the map that, given a $\G \in H^1(\Omega)^{3 \times 2}$, maps to the unique solution of \eqref{eq:first order inside}.
We define the following subset
\begin{equation}
B= \left\{ \tilde{G} \in \mathbb{W}_D ; \Vert \tilde{G} \Vert_{H^1(\Omega)} \le C \Vert G_D \Vert_{H^{\frac12}(\partial \Omega)}\right\},
\end{equation}
where $C>0$ is the constant from Lemma \ref{th:first order}.

\begin{proposition}
\label{th:fixed point}
The map $T$ admits a fixed point $\G \in B$ which verifies
\[ \bar{A}(\G)\G = 0 \text{ a.e. in } \Omega. \]
\end{proposition}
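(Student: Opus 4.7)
The plan is to apply Schauder's fixed point theorem to $T$ viewed as an endomorphism of $B$ equipped with the topology of $H^1(\Omega)^{3\times 2}$. First I would check that $B$ is a nonempty, convex, compact subset of $H^1(\Omega)^{3\times 2}$: nonemptiness comes from applying $T$ to any $H^2$-extension $\hat{G}_D$ of $G_D$ (whose existence is used in the proof of Lemma \ref{th:first order}), since Proposition \ref{th:existence linear case} then places $T(\hat{G}_D)$ in $B$; convexity is clear because $B$ is the intersection of the $H^2$-norm ball of radius $C\|G_D\|_{H^{3/2}(\partial\Omega)}$ with the linear Clairault constraint $G^x_y=G^y_x$ and the affine boundary constraint $G|_{\partial\Omega}=G_D$; compactness in $H^1$ follows because $B$ is bounded in $H^2$ (giving relative $H^1$-compactness by Rellich--Kondrachov) and $H^1$-closed (any $H^1$-limit of a sequence in $B$ is also, by boundedness, a weak $H^2$-limit, and thus inherits the $H^2$-norm bound by weak lower semicontinuity, Clairault's identity by weak closedness of a linear constraint, and the boundary value by continuity of the trace on $H^1$). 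By Proposition \ref{th:existence linear case}, $T(B)\subset B$.

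The core step is to prove that $T:B\to B$ is continuous for the $H^1$ topology. Let $\G_n\to\G$ in $H^1(\Omega)^{3\times 2}$ with $\G_n\in B$, and set $G_n:=T(\G_n)\in B$. Since $(G_n)$ is bounded in $H^2$, any subsequence admits a further subsequence converging weakly in $H^2$ and strongly in $H^1$ to some $G_\infty\in B$. By the two-dimensional Sobolev embedding, $\G_n\to\G$ in $L^p(\Omega)^{3\times 2}$ for every $p<\infty$; combining this with the uniform bounds and Lipschitz continuity of $\bar p$ and $\bar q$ yields
\[ \bar p(\G_n^x)\to\bar p(\G^x),\qquad \bar q(\G_n^y)\to\bar q(\G^y)\quad\text{strongly in }L^p(\Omega)\text{ for every }p<\infty. \]
The strong $H^1$-convergence of $G_n$ simultaneously provides strong $L^2$-convergence of the first derivatives $(G_n^x)_x$ and $(G_n^y)_y$. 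Taking the product and using Hölder, $\bar A(\G_n)G_n\to\bar A(\G)G_\infty$ strongly in $L^2(\Omega)^3$, and since $\bar A(\G_n)G_n=0$ for every $n$, we obtain $\bar A(\G)G_\infty=0$ a.e.~in $\Omega$. Since $G_\infty\in\mathbb{W}_D$, the uniqueness statement of Lemma \ref{th:first order} forces $G_\infty=T(\G)$, so the whole sequence $G_n$ converges to $T(\G)$ in $H^1$, establishing continuity.

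Schauder's theorem then delivers a fixed point $\G\in B$ of $T$, which by construction lies in $\mathbb{W}_D\cap H^2(\Omega)^{3\times 2}$ and satisfies $\bar A(\G)\G=0$ a.e.~in $\Omega$. The delicate point in this plan is the continuity argument: passing to the limit in the nonlinear product $\bar p(\G_n^x)(G_n^x)_x+\bar q(\G_n^y)(G_n^y)_y$ requires combining strong $L^p$ convergence of the Nemytskii-type coefficients built from the Lipschitz cutoffs with the strong $L^2$ convergence of the derivatives of $G_n$ supplied by the compact embedding $H^2\hookrightarrow H^1$, and then leveraging the linear uniqueness from Lemma \ref{th:first order} to upgrade subsequential convergence into full-sequence convergence.
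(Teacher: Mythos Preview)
Your proposal is correct and follows essentially the same approach as the paper: Schauder's fixed point theorem applied to $T$ on $B$, with continuity established via a subsequence argument (boundedness in $H^2$, Rellich--Kondrachov, passage to the limit in $\bar A(\G_n)G_n=0$, and then Lemma~\ref{th:first order} uniqueness to recover full-sequence convergence). The only cosmetic differences are that the paper verifies $TB$ is precompact rather than $B$ compact, and that the paper bounds $\|\bar A(\G)G_n\|_{L^2}=\|\bar A(\G)G_n-\bar A(\G_n)G_n\|_{L^2}\le K\|\G-\G_n\|_{H^1}\|G_n\|_{H^2}$ directly from the Lipschitz property of the coefficients, whereas you pass to the limit in the product via Nemytskii continuity in $L^p$; both routes are equivalent here.
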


\begin{proof}
Let us show that $B$ is stable by $T$: $TB \subset B$.
Let $\G \in B$.
Letting $G:= T\G$, one thus has $G \in B$, as a consequence of Lemma \ref{th:first order}.
Also, $B \subset L^2(\Omega)^{3 \times 2}$ with a compact Sobolev embedding and $L^2(\Omega)^{3 \times 2}$ equipped with its usual norm is a Banach space.
Thus, $B$ is a closed convex subset of the Banach space $L^2(\Omega)^{3 \times 2}$ and $TB$ is precompact in $L^2(\Omega)^{3 \times 2}$.

We prove that $T$ is continuous over $B$.
Let $\left(\G_n\right)_n$ be a sequence of $B$ such that $\G_n \mathop{\longrightarrow} \limits_{n \to +\infty} \G \in B$, for $\Vert \cdot \Vert_{H^1(\Omega)}$.
Let $G := T \G$ and $G_n := T \G_n$, for $n \in \mathbb{N}$.
We want to prove that $G_n \mathop{\longrightarrow} \limits_{n \to +\infty} G$ for $\Vert \cdot \Vert_{H^1(\Omega)}$.
As for all $n \in \mathbb{N}$, $G_n \in B$, $\left( G_n \right)_n$ is bounded in $H^1(\Omega)^{3 \times 2}$.
Therefore, there exists $\hat{G} \in H^1(\Omega)^{3 \times 2}$, up to a subsequence,  $G_n \mathop{\rightharpoonup} \limits_{n \to +\infty} \hat{G}$ weakly in $H^1(\Omega)$.
Due to the Rellich--Kondrachov theorem, $G_n \mathop{\longrightarrow} \limits_{n \to +\infty} \hat{G}$ strongly in $L^2(\Omega)$.

Let us show that $\hat{G}$ is a solution of \eqref{eq:first order inside}.
By definition, one has
\begin{equation}
\label{eq:aux2}
\begin{aligned}
\Vert \bar{A}(\G)G_n \Vert_{L^2(\Omega)} &= \Vert \bar{A}(\G)G_n - \bar{A}(\G_n)G_n \Vert_{L^2(\Omega)} \\
& \le K \Vert \G - \G_n \Vert_{H^1(\Omega)}  \Vert G_n \Vert_{H^1(\Omega)} \\
& \le C K \Vert G_D \Vert_{H^\frac12(\partial \Omega)} \Vert \G - \G_n \Vert_{H^1(\Omega)} \mathop{\longrightarrow}_{n \to \infty} 0,
\end{aligned}
\end{equation}
since $\bar{A}$ is $K$-Lipschitz.
The left-hand side converges to $\Vert \bar{A}(\G) \hat{G} \Vert_{L^2(\Omega)}$ when $n \to +\infty$.
As for all $n \in \mathbb{N}$, $G_n \in \mathbb{W}_D$, then  $\hat{G} \in \mathbb{W}_D$ and $\hat{G} = G_D$ on $\partial \Omega$.
Therefore, $\hat{G}$ solves \eqref{eq:first order inside}.
As proved in Lemma \ref{th:first order}, the solution of \eqref{eq:first order inside} is unique in $\mathbb{W}_D$. 
Thus, the full sequence $\left( G_n \right)_n$ converges strongly towards $\hat{G} = G = T \G$ in $L^2(\Omega)$.

To finish showing the continuity of $T$ over $H^1(\Omega)$, we notice that by definition, $\bar{A}(\G) (G_n - G) = \bar{A}(\G)G_n$.
Also note that $G - G_n \in W_0$ and $G^x_y - G^x_{n,y} = G^y_x - G^y_{n,x}$ a.e. in $\Omega$.
Using the coercivity proved in Lemma \ref{th:coercivity}, one has
\[ C \|\nabla G - \nabla G_n \|_{L^2(\Omega)}^2 \le a(\G;G - G_n, G - G_n) = \| \bar{A}(\G)(G - G_n) \|^2_{L^2(\Omega)} = \| \bar{A}(\G)G_n \|^2_{L^2(\Omega)} \to 0, \]
as $n \to +\infty$, by using \eqref{eq:aux2}.
Therefore, $G_n \to G$ strongly in $H^1(\Omega)$.
We conclude with the Schauder fixed point Theorem, see Corollary 11.2 of \cite{gilbarg2015elliptic}, that $T$ admits a fixed point.
\end{proof}

\begin{remark}
Proposition \ref{th:fixed point} shows that \eqref{eq:gradient eq} has solutions but does not address uniqueness.
We believe uniqueness cannot be proved with conventional tools.
Indeed, uniqueness is generally obtained by considering a boundary condition $G_D$ whose $H^\frac12(\partial \Omega)$ norm is small enough to make the fix point map $T$ contracting.
However, Hypothesis \ref{hyp} considers $|G_D^y| > 1$ a.e. on $\partial \Omega$, which entails
\[ \Vert G_D \Vert_{H^\frac12(\partial \Omega)} \ge 1. \]
This prevents one from considering small enough boundary conditions.
\end{remark}

The following allows one to recover solutions of \eqref{eq:nonlinear operator eq} from solutions of \eqref{eq:gradient eq}.
\begin{coro}
\label{th:phi}
Let $\G \in \mathbb{W}_D$ be a solution of \eqref{eq:gradient eq}, and $\mathcal{V} := \{\varphi \in V \ | \ \int_\Omega \varphi = 0\}$.
There exists a unique $\varphi \in \mathcal{V}$,
\[ \nabla \varphi = \mathcal{G} \text{ a.e. in } \Omega,\]
which verifies
\begin{equation}
\label{eq:cut-off main eq phi}
\bar{p}(\varphi_x) \varphi_{xx} + \bar{q}(\varphi_y) \varphi_{yy} = 0 \text{ in } \Omega.
\end{equation}
\end{coro}

\begin{proof}
Let $\mathcal{X} = \{\psi \in H^1(\Omega)^3 \ | \ \int_\Omega \psi = 0\}$.
As $\G \in \mathbb{W}$, it verifies $\G^x_y = \G^y_x$ a.e. in $\Omega$.
As $\Omega$ is a bounded, simply-connected, domain with a Lipschitz boundary, we use the Helmoltz decomposition, see \cite{helmoltz} for instance, and there exists $\varphi \in \mathcal{X}$ such that $\nabla \varphi = \G$ in $L^2(\Omega)$.
As $\G \in W$, $\varphi \in H^2(\Omega)$.
Also, one has
\[ \bar{A}(\G) \G = 0, \]
and therefore, one has in $L^2(\Omega)$,
\[ \bar{p}(\varphi_x) \varphi_{xx} + \bar{q}(\varphi_y) \varphi_{yy} = 0.\]
\end{proof}

\subsection{Constrained problem}
Proposition \ref{th:phi} proved that there exists $\varphi \in \mathcal{V}$ which solves \eqref{eq:cut-off main eq phi}.
This section addresses the verification of the constraints \eqref{eq:microstructure} - \eqref{eq:ineq constraint}.
The argument used in this section requires some more regularity on $\G$, solution of \eqref{eq:nonlinear operator eq}.
In order to obtain $\G \in \mathcal{C}^0(\bar{\Omega})$, we assume the following.

\begin{hyp}
\label{hyp2}
The Dirichlet boundary condition has the extra regularity $G_D \in W^{1-\frac1{s},s}(\partial \Omega)$ for $2 < s < 4$.
\end{hyp}
This hypothesis will be used in the rest of this section.
Using Hypothesis \ref{hyp2}, our objective is to prove that $\G \in \mathbb{W}^{1,r}(\Omega) := \{ G \in W^{1,r}(\Omega) ; G^x_y = G^y_x \text{ a.e. in } \Omega \}$, for a certain $2 < r < s$.

\begin{lemma}
\label{th:extra regularity}
Let $\mathcal G \in \mathbb W_D$ be a solution of \eqref{eq:gradient eq} where $G_D$ verifies Hypothesis \ref{hyp2}.
There exists $2 < r < s < 4$, $\mathcal G \in W^{1,r}(\Omega)$.
Therefore, $\mathcal G \in \mathcal{C}^0(\bar{\Omega})$.
\end{lemma}

\begin{proof}
We are going to write a mixed formulation different from \eqref{eq:mixed continuous}.
First, we lift the boundary conditions: there exists $\hat{G}_D \in \mathbb W^{1,s}(\Omega)^{3 \times 2}$ such that $\hat{G}_D = G_D$ on $\partial \Omega$.
We define the associated right-hand side
\[f := -\bar{A}(\mathcal{G}) \hat{G}_D \in L^s(\Omega)^3.\]
Let $2 < r < s$.
For $G, \tilde{G} \in W^{1,r}_0(\Omega)^{3 \times 2}$, we define the bilinear form,
\begin{equation*}
\mathfrak{a}(G,\tilde{G}) := \int_\Omega \bar{A}(\G) G \cdot\mathrm{div}(\tilde{G}).
\end{equation*}
Let $T := \{t \in L^{r'}(\Omega)^3 ; \int_\Omega t = 0\}$, where $\frac1{r} + \frac1{r'} = 1$.
For $G \in W^{1,r}_0(\Omega)^{3 \times 2}$ and $\tilde{t} \in T$, we define
\begin{equation*}
\mathfrak{b}(G,\tilde{t}) = \int_\Omega \mathrm{div}(\epsilon G) \cdot \tilde{t}.
\end{equation*}
The mixed formulation consists in seeking $(G,t) \in W^{1,r}_0(\Omega)^{3 \times 2} \times T$,
\begin{subequations}
\label{eq:mixed2}
\begin{gather}
\mathfrak a(G,\tilde{G}) + \mathfrak b(\tilde{G},t) = 0, \quad \forall \tilde{G} \in W^{1,r}_0(\Omega)^{3 \times 2} , \\
\mathfrak b(G,\tilde{t}) = 0, \quad \forall \tilde{t} \in T.
\end{gather}
\end{subequations}

Our first objective is to prove that $\bar{A}(\mathcal G)$ is bijective from $\mathbb W^{1,r}_0(\Omega)$, the homogeneous space associated to $\mathbb W^{1,r}(\Omega)$, onto $L^r(\Omega)$.
We follow a proof similar to the one of Lemma \ref{th:coercivity}.
We define $\gamma \equiv \gamma(\mathcal G)$ as in Lemma \ref{th:coercivity}.
As $\bar{A}(\G)$ is uniformly elliptic, it verifies the Cordes condition.
Thus, there exists $\varepsilon \in (0,1]$, independent of $\G$, such that
\[ \frac{\bar{p}(\G^x)^2 + \bar{q}(\G^y)^2}{(\bar{p}(\G^x) + \bar{q}(\G^y))^2} \le \frac1{1 + \varepsilon}. \]
Using the bounds $1 \le p(\G^x), q(\G^y) \le 4$, one can show that any $\varepsilon < \frac1{32}$ works.

We now show the first condition of the BNB theorem.
Let $G \in \mathbb W^{1,s}_0(\Omega)$.
Following \cite{gallistl2017variational}, one has a.e. in $\Omega$,
\[  \gamma |\bar{A}(\G) G| \ge |\mathrm{div}(G)| - \sqrt{1 - \varepsilon} |\nabla G|. \]
In \cite{smears}, the Miranda--Talenti estimate $\|\Delta v \|_{L^2(\Omega)} = \| \nabla^2 v \|_{L^2(\Omega)}$, for $v \in H^2(\Omega)^3 \cap H^1_0(\Omega)^3$, where $\nabla^2$ denotes the Hessian, is advocated to obtain coercivity.
In \cite{gallistl2017stable,gallistl2019numerical}, a similar $L^2$ based result is used to obtain coercivity.
Here, we instead advocate that one can use the Calderon--Zygmund inequality, see \cite[Corollary~9.10]{gilbarg2015elliptic}.
We now use a generalization of the Helmoltz decomposition to $L^p(\Omega)$ for $p > 2$, (see e.g. \cite[Theorem~1.2]{MR1883390} or \cite{MR1867691}).
As $G^x_y = G^y_x$ a.e. in $\Omega$, we deduce that there exists $v \in W^{2,s}_0(\Omega)^3$ such that $\nabla v = G$ in $L^s(\Omega)$.
Using the Calderon--Zygmund inequality, there exists $C > 0$, depending on $\Omega$ and $s$, such that
\[ \| \nabla^2 v \|_{L^s(\Omega)} \le C \| \Delta v \|_{L^s(\Omega)}.  \] 
One thus has
\[ \| \nabla G \|_{L^s(\Omega)} \le C \| \mathrm{div}(G) \|_{L^s(\Omega)}.  \] 
If the constant in the previous inequality is such that $C < 1$, then for any $2< r \le s$,
\[ \| \gamma \bar{A}(\G) G \|_{L^r(\Omega)} \ge \left(\frac1C - \sqrt{1 - \varepsilon} \right) \| \nabla G \|_{L^r(\Omega)},  \]
with a nonnegative constant.
But if $C > 1$, then we use the Riesz--Thorin interpolation theorem for $2 < r < s$, see \cite[Theorem~A.27]{ern2021finite}. Thus, for any $\alpha \in (0,1)$, one has
\[ \| \nabla G \|_{L^r(\Omega)} \le C^\alpha \| \mathrm{div}(G) \|_{L^r(\Omega)},  \]
where the value of $r > 2$ depends on $\alpha$.
We can choose $\alpha$ close enough to zero such that $C^{-\alpha} > \sqrt{1 - \varepsilon}$.
Therefore,
\[ \| \gamma \bar{A}(\G) G \|_{L^r(\Omega)} \ge \left(C^{-\alpha} - \sqrt{1 - \varepsilon} \right) \| \nabla G \|_{L^r(\Omega)},  \]
with a nonnegative constant. 
Similarly to the end of the proof of Lemma \ref{th:coercivity},
the upper bound on $\gamma$ provides the inf-sup stability of $\bar{A}(\G)$ over $\mathbb W_0^{1,r}(\Omega)^{3 \times 2}$.

Let us now show the second condition of the BNB theorem.
Let $\tilde{G} \in \mathbb W^{1,r}_0(\Omega)$ such that, $\mathfrak{a}(G,\tilde{G}) = 0$, for all $G \in \mathbb W^{1,r}_0(\Omega)$.
We want to show that $\tilde{G} = 0$.
Using the coercivity result from \cite{smears,gallistl2017variational}, one has for $\tilde{G} \in \mathbb{H}^1_0(\Omega)$,
\[ 0 = \mathfrak a(\tilde{G}, \tilde{G}) \ge C' \| \nabla \tilde{G} \|_{L^2(\Omega)}^2, \] 
where $C' > 0$ is a constant.
Thus $\tilde{G} = 0$.

Regarding the inf-sup stability for $\mathfrak b$, we refer to \cite[Remark~53.10]{MR4269305}.
One can conclude with the Babuska--Brezzi theorem, see \cite[Theorem~49.13]{MR4269305}, that there exists a unique solution to \eqref{eq:mixed2}.
As $\G$ is a fixed point, then it is the unique solution and benefits from the extra regularity.
A classical Sobolev injection finishes the proof.
\end{proof}

\begin{lemma}
\label{th:make solution}
Let $\varphi \in \mathcal{V}$ be a solution of Equation \eqref{eq:cut-off main eq phi} and assume Hypothesis \ref{hyp2} applies,
There exists a neighborhood $\Omega'$ of $\partial \Omega$ where $1 < |\varphi_y|^2 < 4$ and $|\varphi_x|^2 < 3$ a.e. in $\Omega'$.
We define in $\Omega'$,
\[ \left\{ \begin{aligned} 
u &:= \varphi_x \cdot \varphi_y, \\
 v &:= \log\left((1-\frac14|\varphi_x|^2)|\varphi_y|^2\right).
 \end{aligned} \right. \]
Then $(u,v) \in \mathcal C^0(\overline{\Omega'})^2 \cap H^1(\Omega')^2$,
and it verifies
\begin{equation}
\left\{
\begin{aligned}
&\mathfrak{p} u_x + 2 v_y = 0, \\
&\mathfrak{q} u_y - 2 v_x = 0, \\
\end{aligned}
\right.
\label{eq:ODE}
\end{equation}
where $\mathfrak{p} \equiv p(\varphi_x)$ and $\mathfrak{q} \equiv q(\varphi_y)$.
\end{lemma}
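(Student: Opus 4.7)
I would split the proof into a regularity step and an algebraic derivation step. For regularity, by Proposition \ref{th:phi} we have $\varphi\in H^3(\Omega)^3$, hence $\varphi_x,\varphi_y\in H^2(\Omega)^3\hookrightarrow \mathcal{C}^0(\bar\Omega)^3$. Since $H^2$ in two dimensions is an algebra, the product $u=\varphi_x\cdot\varphi_y$ lies in $H^2(\Omega)\subset H^1(\Omega')$. For $v$, set $w:=(1-\tfrac14|\varphi_x|^2)|\varphi_y|^2\in H^2(\Omega)$. On $\Omega'$ we have $|\varphi_y|^2>1$ pointwise, and by continuity of $\nabla\varphi$ together with Hypothesis \ref{hyp} (which gives $|\varphi_x|^2\le3$, $|\varphi_y|^2\le 4$ on $\partial\Omega$), we may assume $\Omega'$ so small that also $|\varphi_x|^2<3$ and $|\varphi_y|^2<4$ there, whence $w$ is continuous, bounded, and bounded away from $0$. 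The chain rule then gives $v=\log w\in H^1(\Omega')$ with $\nabla v=\nabla w/w$.

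For the PDE system, I would dot the equation $\bar p\,\varphi_{xx}+\bar q\,\varphi_{yy}=0$ of Proposition \ref{th:phi} with $\varphi_y$ and with $\varphi_x$ in turn. The elementary identities
\[
\varphi_{xx}\!\cdot\!\varphi_y = u_x-\tfrac12(|\varphi_x|^2)_y,\qquad \varphi_{yy}\!\cdot\!\varphi_y=\tfrac12(|\varphi_y|^2)_y,
\]
and the symmetric pair with $\varphi_x$, give the two scalar relations
\[
\bar p\,u_x = \tfrac{\bar p}{2}(|\varphi_x|^2)_y-\tfrac{\bar q}{2}(|\varphi_y|^2)_y,\qquad \bar q\,u_y = -\tfrac{\bar p}{2}(|\varphi_x|^2)_x+\tfrac{\bar q}{2}(|\varphi_y|^2)_x.
\]
Because we have restricted to the region where the cut-offs are inactive, we have $1-\tfrac14|\varphi_x|^2=1/\bar p$ and $|\varphi_y|^2=4/\bar q$, hence $(|\varphi_x|^2)_\cdot=4\bar p_\cdot/\bar p^2$ and $(|\varphi_y|^2)_\cdot=-4\bar q_\cdot/\bar q^2$. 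Substituting reduces the right-hand sides to $\pm 2(\log(\bar p\bar q))_\cdot$. Finally, $v=\log 4-\log(\bar p\bar q)$ on $\Omega'$, so $v_y=-(\log(\bar p\bar q))_y$ and $v_x=-(\log(\bar p\bar q))_x$. This yields $\bar p u_x+2v_y=0$ and $\bar q u_y-2v_x=0$.

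\textbf{Main obstacle.} The delicate point is verifying that the identifications $1-\tfrac14|\varphi_x|^2=1/\bar p$ and $|\varphi_y|^2=4/\bar q$ hold on $\Omega'$. These require that both cut-offs be inactive, i.e.\ $|\varphi_x|^2<3$ and $1<|\varphi_y|^2<4$ simultaneously. Openness of this set follows from continuity of $\nabla\varphi$, and its non-emptiness near $\partial\Omega$ from Hypothesis \ref{hyp}. If one prefers to keep $\Omega'$ as defined in Lemma \ref{th:set}, one would need to argue separately (or enlarge $\Omega'$ downward) that the cut-offs saturate only on a negligible set there. A secondary technical point is the justification of the chain rule for $\log w$ in $H^1$, which is standard given the pointwise positive lower bound on $w$ just discussed.
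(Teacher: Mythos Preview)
Your algebraic derivation of \eqref{eq:ODE} and the regularity argument for $u$ are fine, but the gap you yourself flag under ``Main obstacle'' is real and left open. You need $\bar p=p$ and $\bar q=q$ throughout $\Omega'$, equivalently $|\varphi_x|^2\le 3$ and $|\varphi_y|^2\le 4$ there, and your only device for this is continuity of $\nabla\varphi$ near $\partial\Omega$. That at best yields a thin collar of the boundary, not the whole of $\Omega'$; and you are not free to shrink $\Omega'$, since it is the fixed set from Lemma~\ref{th:set} on which the downstream Proposition~\ref{th:two eq} and Theorem~\ref{th:general} rely. Without these global bounds you also cannot bound $w$ away from zero on $\Omega'$, so even the membership $v\in H^1(\Omega')$ is not justified.

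The paper closes this gap by a global maximum-principle argument on $\Omega$. Differentiating \eqref{eq:cut-off main eq phi} in $x$ and in $y$ gives linear uniformly elliptic equations satisfied by $\varphi_x$ and by $\varphi_y$; the maximum principle (applied componentwise) then propagates the boundary bounds $|G_D^x|^2\le 3$ and $|G_D^y|^2\le 4$ of Hypothesis~\ref{hyp} to all of $\Omega$. With these bounds in hand the cut-offs are inactive on $\Omega'$ (the remaining inequality $|\varphi_y|^2>1$ being the very definition of $\Omega'$), $w$ is bounded and bounded away from zero, and $(u,v)\in H^1(\Omega')^2$ follows. The paper's derivation of \eqref{eq:ODE} is then slightly more direct than yours: it computes $v_x,v_y$ straight from the chain rule on $\log\bigl((1-\tfrac14|\varphi_x|^2)|\varphi_y|^2\bigr)$ and combines with the projections of \eqref{eq:cut-off main eq phi} onto $\varphi_x$ and $\varphi_y$, avoiding the detour through $\log(\bar p\bar q)$; but once the global bounds are secured, the substance of your computation is the same.
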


\begin{proof}
Let us note that, as a consequence of Lemma \ref{th:extra regularity}, $\nabla \G \in \mathcal C^0(\bar{\Omega})$.
We reason by contradiction.
Let us assume that there exists an open ball $B$ touching $\partial \Omega$ such that $|\varphi_y | \le 1$ a.e. in $B \cap \Omega$.
By \cite[Theorem~3.87]{ambrosio2000functions}, for $\mathcal H^1$-a.e.~$x \in \partial \Omega$, where $\mathcal H^1$ is the one-dimensional Hausdorff measure, there exists $\varphi_y^\Omega(x) \in \mathbb{R}^3$, such that
\[ \lim_{\rho \to 0} \rho^{-2} \int_{\Omega \cap B_\rho(x)} |\varphi_y(z) - \varphi_y^\Omega(x)| dz = 0, \]
where $B_\rho(x) \subset \mathbb{R}^2$ is the ball of radius $\rho > 0$ centered at $x \in \Omega$. 
By hypothesis, for a.e. $x \in B$, $|\varphi_y(x)| \le 1$.
Therefore, as $\varphi_y$ is continuous, for a.e. $x \in \partial \Omega \cap \partial (B \cap \Omega)$, $|\varphi_y^\Omega(x)| \le 1$.
This contradicts Hypothesis \ref{hyp}.
Thus, there exists an open domain $\Omega'_y \subset \Omega$ such that $1 < |\varphi_y|$ in $\Omega'_y$ and $\partial \Omega'_y \supset \partial \Omega$.
One can repeat the reasoning for the upper bound and for $|\varphi_x|^2 < 3$.
The final domain $\Omega'$ is then the intersection of these three domains.
Note that $\Omega' \neq \emptyset$ as the three domains are neighborhoods of $\partial \Omega$.

Using the definition of $\Omega'$, one has $v \in L^\infty(\Omega')$.
Computing the derivatives in $\mathcal{D}'(\Omega')$ (in the sense of distributions), one has
\[ \left\{ \begin{aligned} 
&u_x = \varphi_{xx} \cdot \varphi_y + \varphi_x \cdot \varphi_{xy}, \\
&u_y = \varphi_{xy} \cdot \varphi_y + \varphi_x \cdot \varphi_{yy},
\end{aligned} \right. \]
and
\[ \left\{ \begin{aligned} 
&v_x = -\frac{\mathfrak{p}}2 \varphi_{xx} \cdot \varphi_x + \frac{\mathfrak{q}}2\varphi_y \cdot \varphi_{yx}, \\
&v_y = -\frac{\mathfrak{p}}2 \varphi_{xy} \cdot \varphi_x + \frac{\mathfrak{q}}2 \varphi_y \cdot \varphi_{yy}.
\end{aligned} \right. \]
As $\varphi \in H^2(\Omega)$, as shown in Proposition \ref{th:phi}, and as $1 \le \mathfrak{p}, \mathfrak{q} \le 4$, one has $\nabla u \in L^2(\Omega')^2$ and $\nabla v \in L^2(\Omega')^2$.
Therefore, $(u,v) \in H^1(\Omega')^2$.
 
Let us now show that \eqref{eq:ODE} is verified.
Equation \eqref{eq:cut-off main eq phi} is projected onto $\varphi_x$ and $\varphi_y$.
One thus has in $L^2(\Omega')$,
\[ \left\{ \begin{aligned} 
&\mathfrak{p} \varphi_{xx}\cdot \varphi_x + \mathfrak{q} \varphi_{yy} \cdot \varphi_x = 0,\\
&\mathfrak{p} \varphi_{xx} \cdot \varphi_y + \mathfrak{q} \varphi_{yy} \cdot \varphi_y = 0.
\end{aligned} \right. \]
Finally, one has in $L^2(\Omega')$,
\[ \left\{ \begin{aligned}
& \mathfrak{p} u_x = - 2 v_y, \\
& \mathfrak{q} u_y = 2 v_x.
\end{aligned} \right. \]
\end{proof}

Lemma \ref{th:make solution} ensures that \eqref{eq:ineq constraint} is verified in $\Omega' \subset \Omega$.
Note that two cases are possible with $\Omega'$: whether $\Omega' = \Omega$, or $\Omega' \subsetneq \Omega$ has one connected component.

\begin{proposition}
\label{th:two eq}
Let us assume that $\Omega' = \Omega$.
The unique solution in $H^1_0(\Omega)^2$ of \eqref{eq:ODE}  is the pair $(0,0)$.
\end{proposition}

\begin{proof}
Let us take the derivatives, in the sense of $\mathcal{D}'(\Omega)$, in $x$ of the first line of \eqref{eq:ODE} and in $y$ of the second line of \eqref{eq:ODE},
\[ \left\{ \begin{aligned}
& - (\mathfrak{p} u_x)_x = 2 v_{yx}, \\
& (\mathfrak{q} u_y)_y = 2v_{xy}.
\end{aligned} \right. \] 
Using the equality of cross derivatives, one has
\[ \mathrm{div}(\mathfrak{A}(z) \nabla u) = 0, \]
where $z=(x,y)$ and $\mathfrak{A}(z) = \begin{pmatrix}
\mathfrak{p}(z) & 0 \\ 0 & \mathfrak{q}(z)
\end{pmatrix}$.
Using \cite[Theorem~8.3]{gilbarg2015elliptic}, there exists a unique $u \in H^1_0(\Omega)$ solution of that linear equation.
We can now apply \cite[Corollary~8.2]{gilbarg2015elliptic} to show that $u = 0$ a.e. in $\Omega$.
Therefore, using \eqref{eq:ODE}, as $v=0$ on $\partial \Omega$, one also has $v = 0$ a.e. in $\Omega$.
\end{proof}

\begin{remark}
It does not seem possible to remove the assumption that $\Omega' = \Omega$ from Proposition \ref{th:make solution} as one does not know if $(u,v)=(0,0)$ on $\partial \Omega' \setminus \partial \Omega$ when $\Omega' \subsetneq \Omega$.
\end{remark}

The consequence of Lemma \ref{th:make solution} and Proposition \ref{th:two eq} consists in stating that as $(u,v) = 0$ on $\partial \Omega$ due to Hypothesis \ref{hyp}, if $\Omega' = \Omega$, then $(u,v) = 0$ a.e.~in $\Omega$.
The main result of this section is stated thereafter.

\begin{theorem}
\label{th:general}
Assuming the Dirichlet boundary condition $G_D$ satisfies Hypotheses \ref{hyp} and \ref{hyp2}, there exists $\G \in \mathbb{W}_D$ solution of \eqref{eq:gradient eq}.
There also exists $\varphi \in \mathcal{V}$ such that $\nabla \varphi = \G$ in $\Omega$ and $\Omega' \subset \Omega$, open set in $\mathbb{R}^2$, such that $\varphi$ is a solution of \eqref{eq:min surface eq} and \eqref{eq:ineq constraint} in $\Omega'$.
If also $\Omega' = \Omega$, then $\varphi$ is a solution of \eqref{eq:strong form equations} a.e. in $\Omega$.
\end{theorem}

\begin{proof}
Using Proposition \ref{th:fixed point}, there exists $\G \in \mathbb{W}_D$ solution of \eqref{eq:gradient eq}.
Using Proposition \ref{th:phi}, there exists $\varphi \in \mathcal{V}$, $\nabla \varphi = \G$.
Using Lemma \ref{th:make solution}, there exists $\Omega' \subset \Omega$ where \eqref{eq:ineq constraint} is verified.
Due to the imposed boundary conditions, one has $(u,v)=(0,0)$ on $\partial \Omega$ in the notation of Lemma \ref{th:make solution}.
If $\Omega' = \Omega$, then using Proposition \ref{th:two eq}, one has $(u,v) = (0,0)$ a.e.~in $\Omega$.
Therefore,  a.e.~in $\Omega$,
\[ \left\{ \begin{aligned}
&\varphi_x \cdot \varphi_y = 0, \\
&\left(1 - \frac14|\varphi_x|^2\right)|\varphi_y|^2 = 1.
\end{aligned} \right.  \]
Thus, \eqref{eq:microstructure} and \eqref{eq:local basis} are verified in $\Omega$.
\end{proof}

Given Hypothesis \ref{hyp}, the only challenging part of \eqref{eq:ineq constraint} is $1 < |\varphi_y|^2$.
Indeed, with arguments along the lines of maximum principles, one expects the upper bounds to be verified.
Therefore, one expects that it is the constraint $1 < |\varphi_y|^2$ that will be determining between $\Omega' = \Omega$ and $\Omega' \subsetneq \Omega$.
Section \ref{sec:annulus} below presents a case in which $\Omega' \neq \Omega$.
The physical interpretation is that on the curves where $1 = |\varphi_y|^2$, the Miura fold is fully folded.
Therefore, the solution computed in the domain where $1 > |\varphi_y|^2$ is not physical.
The techniques used in this paper cannot predict a priori if $\Omega' = \Omega$.
However, assuming $\Omega' = \Omega$ for a given $G_D$, verifying Hypothesis \ref{hyp}, one can expects that a small modification of the Dirichlet boundary conditions, assuming it still verifies Hypothesis \ref{hyp}, will still entail $\Omega' = \Omega$.
Section \ref{sec:deformed} presents such a case.

\section{Numerical method}
In \cite{marazzato}, an $H^2$-conformal finite element method was used to approximate the solutions of \eqref{eq:min surface eq}.
In this paper, a simpler approximation, based on a stabilized formulation, and using $\mathbb{P}^1$--Lagrange finite elements is proposed for an improved efficiency.
Indeed, the new method requires much less degrees of freedom with respects to \cite{marazzato}.
A Newton method is then used to solve the discrete system.

\subsection{Discrete Setting}
Let $\left(\mathcal{T}_h \right)_h$ be a family of quasi-uniform and shape regular triangulations \cite{ern_guermond}, perfectly fitting $\Omega$.
For a cell $c \in \mathcal{T}_h$, let $h_c := \mathrm{diam}(c)$ be the diameter of $c$.
Then, we define $h := \max_{c \in \mathcal{T}_h} h_c$ as the mesh parameter for a given triangulation $\mathcal{T}_h$.
Let $W_h := \mathbb{P}^1(\mathcal{T}_h)^{3 \times 2}$, where the corresponding interpolator is written as $\mathcal{I}_h$.
We also define the following solution space,
\[ W_{hD} := \{G_h \in W_{h}; G_h =  \mathcal{I}_h G_D \text{ on } \partial \Omega  \}, \]
and its associated homogeneous space
\[ W_{h0} := \{G_h \in W_{h}; G_h =  0 \text{ on } \partial \Omega  \}. \]

\subsection{Discrete problem}
We follow \cite[Remark~3.5]{gallistl2019numerical} to introduce a stabilized approximation of \eqref{eq:gradient eq}.
It will give rise to a simpler discrete system than discretizing a mixed system.
In order to approximate a solution of \eqref{eq:gradient eq}, we will first ``freeze the coefficients" and obtain estimates on the linear problem and then apply a fix point theorem.
Let $\G_h \in W_{hD}$.
We search for $G_h \in W_{hD}$ such that
\begin{equation}
\label{eq:linearized discrete inside}
a(\G_h;G_h,\tilde{G}_h) + \mathsf{p}(G_h, \tilde{G}_h) = 0, \quad \forall \tilde{G}_h \in W_{h0},
\end{equation}
where we define the penalty bilinear form $\mathsf{p}$ such that for $G_h, \tilde{G}_h \in W_h$,
\[ \mathsf{p}(G_h, \tilde{G}_h) := \eta \int_\Omega (G_{h,y}^x - G_{h,x}^y) \cdot (\tilde{G}_{h,y}^x - \tilde{G}_{h,x}^y), \]
where $\eta > 0$ is a penalty parameter.
The proof of Lemma \ref{th:unique linearized} details the possible values of $\eta > 0$.

\begin{lemma}
\label{th:unique linearized}
There exists a unique solution $G_h \in W_{hD}$ of \eqref{eq:linearized discrete inside}.
This solution verifies
\begin{equation}
\label{eq:discrete bound}
\Vert G_h \Vert_{H^1(\Omega)} \le C \Vert G_D \Vert_{H^\frac12(\partial \Omega)},
\end{equation}
where $C>0$ is a constant independent of $\G_h$.
\end{lemma}

\begin{proof}
There exists $\hat{G}_D \in H^2(\Omega)^{3 \times 2} \subset \mathcal{C}^0(\Omega)^{3 \times 2}$ such that $\hat{G}_D = G_D$ on $\partial \Omega$.
We define the linear form for $\tilde{G}_h \in W_{h0}$,
\[ L(\G_h; \tilde{G}_h) := -\int_\Omega \bar{A}(\G_h)\mathcal{I}_h \hat{G}_D \cdot \bar{A}(\G_h) \tilde{G}_h, \]
which is bounded on $W_{h0}$, independently of $\G_h$.
Therefore, we search for $\hat{G}_h \in W_{h0}$,
\[ a(\G_h;\hat{G}_h,\tilde{G}_h) + \mathsf{p}(\hat{G}_h,\tilde{G}_h) = L(\G_h; \tilde{G}_h), \quad \forall \tilde{G}_h \in W_{h0}, \]
where $G_h = \hat{G}_h + \mathcal{I}_h \hat{G}_D$.
We want to prove the coercivity of $a(\G_h) + \mathsf{p}$.
Using Lemma \ref{th:coercivity}, applied to $\hat{G}_h$, one has
\begin{align*}
\Vert \mathrm{div}(\hat{G}_h) \Vert_{L^2(\Omega)} &\le (1 - \sqrt{1 - \varepsilon}) \Vert \mathrm{div}(\hat{G}_h) \Vert_{L^2(\Omega)} + \sqrt{1 - \varepsilon} \Vert \mathrm{div}(\hat{G}_h) \Vert_{L^2(\Omega)}, \\
&\le  \gamma_1 \Vert \bar{A}(\G_h)\hat{G}_h \Vert_{L^2(\Omega)} + \sqrt{1 - \varepsilon} \Vert \mathrm{div}(\hat{G}_h) \Vert_{L^2(\Omega)}, \\
& \le \gamma_1 \Vert \bar{A}(\G_h)\hat{G}_h \Vert_{L^2(\Omega)} + \sqrt{1 - \varepsilon} (\Vert \mathrm{div}(\hat{G}_h) \Vert_{L^2(\Omega)} + \Vert \hat{G}^y_{h,x} - \hat{G}^x_{h,y} \Vert_{L^2(\Omega)}).
\end{align*}
Thus,
\[ \frac{1 - \sqrt{1 - \varepsilon}}{\gamma_1} \Vert \mathrm{div}(\hat{G}_h) \Vert_{L^2(\Omega)} \le \Vert \bar{A}(\G_h)\hat{G}_h \Vert_{L^2(\Omega)} +  \frac{\sqrt{1 - \varepsilon}}{\gamma_1} \Vert \hat{G}^y_{h,x} - \hat{G}^x_{h,y} \Vert_{L^2(\Omega)}. \]
Therefore, using a Young inequality,
\[ \left(\frac{1 - \sqrt{1 - \varepsilon}}{\gamma_1} \right)^2 \Vert \mathrm{div}(\hat{G}_h) \Vert_{L^2(\Omega)}^2 \le (1 + \lambda) \Vert \bar{A}(\G_h)\hat{G}_h \Vert_{L^2(\Omega)}^2 + \left(1 + \frac1\lambda \right) \frac{1 - \varepsilon}{\gamma_1^2} \Vert \hat{G}^y_{h,x} - \hat{G}^x_{h,y} \Vert_{L^2(\Omega)}^2, \]
where $\lambda > 0$ is a parameter.
Finally,
\begin{multline*}
\left(\frac{1 - \sqrt{1 - \varepsilon}}{\gamma_1} \right)^2 \Vert \nabla \hat{G}_h \Vert_{L^2(\Omega)}^2 \le (1 + \lambda) \Vert \bar{A}(\G_h)\hat{G}_h \Vert_{L^2(\Omega)}^2 \\ + \left[ \left(\frac{1 - \sqrt{1 - \varepsilon}}{\gamma_1} \right)^2 + \left(1 + \frac1\lambda \right) \frac{1 - \varepsilon}{\gamma_1^2} \right] \Vert \hat{G}^y_{h,x} - \hat{G}^x_{h,y} \Vert_{L^2(\Omega)}^2.
\end{multline*}
Thus there exists $C>0$, independent of $\G_h$,
\[ C \Vert \hat{G}_h \Vert_{H^1(\Omega)}^2 \le a(\G_h;\hat{G}_h,\hat{G}_h) + \mathsf{p}(\hat{G}_h,\hat{G}_h), \]
by choosing adequately a value of $\eta > 0$, depending on $\lambda > 0$.
As a consequence of the Lax--Milgram lemma, one has
\[\Vert G_h \Vert_{H^1(\Omega)} \le \Vert \mathcal{I}_h \hat{G}_D \Vert_{H^1(\Omega)} + \Vert \hat{G}_h \Vert_{H^1(\Omega)}
\le C' \Vert \mathcal{I}_h \hat{G}_D \Vert_{H^1(\Omega)}
\le C'' \Vert G_D \Vert_{H^\frac12(\partial \Omega)}, \]
where $C',C''>0$ are independent from $\G_h$.
\end{proof}

\begin{proposition}
\label{th:nonlinear discrete eq}
There exists a solution $\G_h \in W_{hD}$ of
\begin{equation}
\label{eq:discrete eq}
a(\G_h; \G_h, \tilde{G}_h) + \mathsf{p}(\G_h, \tilde{G}_h) = 0, \quad \forall \tilde{G}_h \in W_{h0}.
\end{equation}

\end{proposition}

\begin{proof}
Let us define
\[ B_h := \left\{G_h \in W_{hD} ; \Vert G_h \Vert_{H^1(\Omega)} \le C \Vert G_D \Vert_{H^\frac12(\partial \Omega)} \right\}, \]
where $C>0$ is the constant from Lemma \ref{th:unique linearized}.
Let $T_h : W_{hD} \to W_{hD}$ be such that $T_h : \G_h \mapsto G_h$, where $G_h$ is the unique solution of \eqref{eq:linearized discrete inside}.
Equation \eqref{eq:discrete bound} shows that $T_h B_h \subset B_h$.

We now prove that $T_h$ is continuous over $B_h$.
Let $\left(\G_n\right)_n$ be a sequence of $B_h$ such that $\G_n \mathop{\longrightarrow} \limits_{n \to +\infty} \G_\infty \in B_h$ strongly in $H^1(\Omega)$.
Let $G_\infty := T_h \G_\infty$ and $G_n := T_h \G_n$, for $n \in \mathbb{N}$.
We want to prove that $G_n \mathop{\longrightarrow} \limits_{n \to +\infty} G_\infty$ strongly in $H^1(\Omega)$.
As for all $n \in \mathbb{N}$, $G_n \in B_h$, $\left( G_n \right)_n$ is bounded in $H^1(\Omega)^{3 \times 2}$.
Therefore, there exists $\hat{G} \in H^1(\Omega)^{3 \times 2}$, up to a subsequence,  $G_n \mathop{\rightharpoonup} \limits_{n \to +\infty} \hat{G}$ weakly in $H^1(\Omega)$.
Due to the Rellich--Kondrachov theorem, $G_n \mathop{\longrightarrow} \limits_{n \to +\infty} \hat{G}$ strongly in $L^2(\Omega)$.
Let us show that $\hat{G}$ is a solution of \eqref{eq:linearized discrete inside}.
By definition, one has
\[ a(\G_n; G_n , \tilde{G}_h) + \mathsf{p}(G_n,\tilde{G}_h) = 0, \quad \forall \tilde{G}_h \in W_{h0}. \]
Using the strong and weak convergences proved above, one gets
\[ a(\G_\infty; \hat{G} , \tilde{G}_h) + \mathsf{p}(\hat{G},\tilde{G}_h) = 0, \quad \forall \tilde{G}_h \in W_{h0}. \]
Using Lemma \ref{th:unique linearized}, one has $\hat{G} = G_\infty$.
Let us now prove that $G_n \mathop{\longrightarrow} \limits_{n \to +\infty} G_\infty$ strongly in $H^1(\Omega)$.
Let $\tilde{G}_h \in W_{h0}$.
By definition of $G_h$ and $G_n$, one has
\begin{align*}
0 & =  a(\G_\infty; G_\infty, \tilde{G}_h) - a(\G_n;G_n,\tilde{G}_h) + \mathsf{p}(G_\infty - G_n, \tilde{G}_h) \\
& = a(\G_\infty; G_\infty-G_n, \tilde{G}_h) + \mathsf{p}(G_\infty - G_n, \tilde{G}_h) + a(\G_\infty; G_n, \tilde{G}_h) - a(\G_n; G_n, \tilde{G}_h). 
\end{align*}
Thus,
\[ a(\G_\infty; G_\infty-G_n, \tilde{G}_h) + \mathsf{p}(G_\infty - G_n, \tilde{G}_h) = - a(\G_\infty; G_n, \tilde{G}_h) - \mathsf{p}(G_n, \tilde{G}_h) \mathop{\longrightarrow}_{n \to +\infty} 0,  \]
as $G_n \rightharpoonup G_\infty$ weakly in $H^1(\Omega)$.
Let $F := \lim_{n \to +\infty} G_\infty - G_n \in W_{h0}$.
As $a(\G_\infty)$ and $\mathsf{p}$ are continuous, one has
\[ a(\G_\infty; F, \tilde{G}_h) + \mathsf{p}(F, \tilde{G}_h) = 0, \quad \forall \tilde{G}_h \in W_{h0}. \] 
We now take $\tilde{G}_h = F$ and use the coercivity shown in the proof of Lemma \ref{th:unique linearized} and get
\[ C \Vert F \Vert_{H^1(\Omega)}^2 \le a(\G_\infty; F, F) + \mathsf{p}(F, F) = 0. \]
We thus obtain the expected strong convergence in $H^1(\Omega)$.
The existence of the fixed point is provided by the Brouwer fixed point theorem, see \cite{brezis}.
\end{proof}

\subsection{Recomputing $\varphi_h$}
Following \cite{gallistl2017variational}, the approximation space is written as
\[ V_h := \left\{ \psi_h \in \mathbb{P}^2(\mathcal{T}_h)^3 ; \int_\Omega \psi_h = 0 \right\}.\]
A least-squares method is used.
The bilinear form $\mathtt{a} \in \mathcal{L}(V_h^2)$ is defined as
\[ \mathtt{a}(\varphi_h, \tilde{\varphi}_h) := \int_\Omega \nabla \varphi_h \cdot \nabla \tilde{\varphi}_h. \]
It is classically coercive over $V_h$.
The linear form $\mathtt{b} \in \mathcal{L}(V_h)$ is defined as
\[ \mathtt{b}(\tilde{\varphi}_h) := \int_\Omega \G_h\cdot \nabla \tilde{\varphi}_h. \]
\begin{lemma}
There exists a unique $\varphi_h \in V_h$ solution of
\begin{equation}
\label{eq:discrete solution}
\mathtt{a}(\varphi_h, \tilde{\varphi}_h) = \mathtt{b}(\tilde{\varphi}_h), \quad \forall \tilde{\varphi}_h \in V_h.
\end{equation}
\end{lemma}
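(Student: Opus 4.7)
The plan is a straightforward application of the Lax--Milgram lemma on the finite-dimensional Hilbert space $V_h$, which is a closed subspace of $H^1(\Omega)^3$ equipped with the standard $H^1$ norm. Since $V_h$ consists of functions with zero mean on $\Omega$, the Poincar\'e--Wirtinger inequality provides a constant $C_P > 0$ such that $\Vert \psi_h \Vert_{L^2(\Omega)} \le C_P \Vert \nabla \psi_h \Vert_{L^2(\Omega)}$ for every $\psi_h \in V_h$. This is what makes the $H^1$ seminorm equivalent to the full $H^1$ norm on $V_h$, and thus implies the announced coercivity of $\mathtt{a}$.

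First, I would check continuity and coercivity of $\mathtt{a}$. Continuity follows from Cauchy--Schwarz, giving $|\mathtt{a}(\varphi_h,\tilde{\varphi}_h)| \le \Vert \nabla \varphi_h \Vert_{L^2(\Omega)} \Vert \nabla \tilde{\varphi}_h \Vert_{L^2(\Omega)} \le \Vert \varphi_h \Vert_{H^1(\Omega)} \Vert \tilde{\varphi}_h \Vert_{H^1(\Omega)}$. For coercivity, using Poincar\'e--Wirtinger, one has $\mathtt{a}(\varphi_h,\varphi_h) = \Vert \nabla \varphi_h \Vert_{L^2(\Omega)}^2 \ge (1+C_P^2)^{-1} \Vert \varphi_h \Vert_{H^1(\Omega)}^2$ for all $\varphi_h \in V_h$.

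Next, I would verify that $\mathtt{b}$ is continuous as a linear form on $V_h$. Since Proposition~\ref{th:nonlinear discrete eq} ensures the existence of $\mathcal{G}_h \in W_h$, Cauchy--Schwarz gives $|\mathtt{b}(\tilde{\varphi}_h)| \le \Vert \mathcal{G}_h \Vert_{L^2(\Omega)} \Vert \nabla \tilde{\varphi}_h \Vert_{L^2(\Omega)} \le \Vert \mathcal{G}_h \Vert_{L^2(\Omega)} \Vert \tilde{\varphi}_h \Vert_{H^1(\Omega)}$, so $\mathtt{b}$ is bounded on $V_h$.

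Applying the Lax--Milgram lemma in the Hilbert space $V_h$ then gives existence and uniqueness of $\varphi_h \in V_h$ satisfying \eqref{eq:discrete solution}. No real obstacle is expected here since the problem is linear, finite-dimensional, and the coercivity of $\mathtt{a}$ has essentially been stated in the paragraph preceding the lemma; the only point requiring care is invoking the zero-mean condition built into $V_h$ to upgrade the $H^1$ seminorm control into a full $H^1$ norm control.
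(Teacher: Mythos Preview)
Your proposal is correct and matches the paper's intent: the paper actually omits the proof entirely, stating only that ``this is a classical result,'' and it has already noted just before the lemma that $\mathtt{a}$ ``is classically coercive over $V_h$.'' Your Lax--Milgram argument via Poincar\'e--Wirtinger is exactly the standard route one would take to fill in that omission.
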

The proof is omitted for concision, as this is a classical result.

\subsection{Convergence of the FEM}
\begin{theorem}
\label{th:convergence rate}
Let $(\G_h)_h$ be a sequence of solutions of \eqref{eq:discrete eq}.
We define $(\varphi_h)_h$ as the sequence of unique solutions of \eqref{eq:discrete solution} associated to $(\G_h)_h$.
There exists $\varphi \in \mathcal{V}$, solution of \eqref{eq:cut-off main eq phi}, such that, up to a subsequence,
\[ \lim_{h \to 0} \Vert \varphi - \varphi_h \Vert_{H^1(\Omega)} = 0. \]
\end{theorem}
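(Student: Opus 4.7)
The plan is to exploit the uniform $H^1$-bound on the discrete solutions provided by Lemma \ref{th:unique linearized}, extract a convergent subsequence, identify its limit as a continuous solution via Proposition \ref{th:phi}, and then transfer convergence from $\mathcal{G}_h$ to $\varphi_h$ through the least-squares formulation \eqref{eq:discrete solution}. The argument closely parallels the continuous fixed-point analysis in Proposition \ref{th:fixed point}, but replaces the map $T$ by its discrete counterpart and handles $h \to 0$ in place of $n \to \infty$.

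First I would observe that, by Lemma \ref{th:unique linearized}, $(\mathcal{G}_h)_h$ is bounded in $H^1(\Omega)^{3\times 2}$ uniformly in $h$ by $C \Vert G_D \Vert_{H^{3/2}(\partial \Omega)}$. Since $\Omega \subset \mathbb{R}^2$ is bounded and Lipschitz, Rellich--Kondrachov yields, up to a subsequence, a limit $\mathcal{G}$ with $\mathcal{G}_h \rightharpoonup \mathcal{G}$ weakly in $H^1$, strongly in $L^p(\Omega)^{3\times 2}$ for every $p < \infty$, and pointwise a.e. The discrete Lagrange multipliers $r_h$ are likewise bounded in $R$, by the discrete inf-sup condition on the Taylor--Hood pair invoked in the proof of Lemma \ref{th:unique linearized}, and admit a weak limit $r \in R$. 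Combining the penalty term $\mathfrak{p}$ with the discrete constraint $b(\mathcal{G}_h, \tilde{r}_h)=0$ and density of $R_h$ in $R$, I expect $\Vert \mathcal{G}^x_{h,y} - \mathcal{G}^y_{h,x} \Vert_{L^2} \to 0$, so that $\mathcal{G} \in \mathbb{W}_D$.

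Next I would pass to the limit in the discrete mixed system \eqref{eq:discrete eq} tested against $\mathcal{I}_h \tilde{G}$ and $\mathcal{I}_h \tilde{r}$ for smooth $\tilde{G} \in C^\infty_c(\Omega)^{3 \times 2}$ and $\tilde{r}$. Because $\bar{p}$ and $\bar{q}$ are $K$-Lipschitz and bounded, $\bar{p}(\mathcal{G}_h^x) \to \bar{p}(\mathcal{G}^x)$ and $\bar{q}(\mathcal{G}_h^y) \to \bar{q}(\mathcal{G}^y)$ strongly in every $L^p$ by dominated convergence, which, combined with the weak $L^2$ convergence of the first derivatives of $\mathcal{G}_h$, gives $\bar{A}(\mathcal{G}_h) \mathcal{I}_h \tilde{G} \to \bar{A}(\mathcal{G}) \tilde{G}$ strongly in $L^2$ via a strong--weak product argument. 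Using the uniform $L^2$-boundedness of $\bar{A}(\mathcal{G}_h) \mathcal{G}_h$ (which follows from the coercivity bound), one obtains in the limit $\bar{A}(\mathcal{G}) \mathcal{G} = 0$, i.e.~\eqref{eq:gradient eq}. Proposition \ref{th:phi} then produces a unique $\varphi \in \mathcal{V} \cap H^3(\Omega)^3$ with $\nabla \varphi = \mathcal{G}$ and solving the cut-off PDE, which coincides with \eqref{eq:min surface eq} wherever the pointwise bounds of Theorem \ref{th:general} are attained.

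Finally, $H^1$-convergence of $\varphi_h$ to $\varphi$ follows directly from \eqref{eq:discrete solution}: since $\mathtt{a}$ is the $H^1$-seminorm bilinear form on the zero-mean space $V_h$, the discrete solution $\varphi_h$ is the $H^1$-seminorm best approximation in $V_h$ of any primitive of $\mathcal{G}_h$, and a Céa-type estimate yields $\Vert \nabla(\varphi - \varphi_h) \Vert_{L^2} \le C \bigl( \inf_{\tilde{\varphi}_h \in V_h} \Vert \nabla(\varphi - \tilde{\varphi}_h) \Vert_{L^2} + \Vert \mathcal{G} - \mathcal{G}_h \Vert_{L^2} \bigr)$; the first term tends to zero by the approximation properties of $V_h$ on the $H^3$-regular function $\varphi$, and the second by the strong $L^2$ convergence of $\mathcal{G}_h$; a Poincaré inequality on $\mathcal{V}$ then upgrades this to full $H^1$ convergence. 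The main obstacle I anticipate is the rigorous passage to the limit in the nonlinear quadratic form $a(\mathcal{G}_h;\mathcal{G}_h,\tilde{G}_h)$, which couples three $h$-dependent factors (two Nemytskii coefficients and two derivatives) and requires both careful bookkeeping of strong versus weak convergences and a genuinely quantitative decay of the Clairault defect $\mathcal{G}^x_{h,y} - \mathcal{G}^y_{h,x}$ so that the limit actually lies in $\mathbb{W}_D$.
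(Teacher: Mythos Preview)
Your approach is essentially the paper's: uniform $H^1$-bound, weak compactness, pass to the limit in the discrete mixed system tested against interpolants of smooth functions, then invoke Proposition~\ref{th:phi}. The two obstacles you flag at the end are not real. No quantitative decay of the Clairault defect is needed: testing \eqref{eq:linearized discrete constraint} with $\mathcal{I}_h\tilde r$ for smooth $\tilde r$ and using only the weak $H^1$-convergence of $\G_h$ already gives $b(\G,\tilde r)=0$, hence $\G\in\mathbb{W}_D$; the penalty term then satisfies $\mathfrak{p}(\G_h,\mathcal{I}_h\Phi)\to\eta\int(\G^x_y-\G^y_x)\cdot(\Phi^x_y-\Phi^y_x)=0$ by a plain weak--strong product. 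The same mechanism handles $a(\G_h;\G_h,\mathcal{I}_h\Phi)$: since $\bar p(\G_h^x),\bar q(\G_h^y)$ are uniformly bounded and converge a.e., they upgrade the strong convergence of $\nabla\mathcal{I}_h\Phi$ and carry the weak $L^2$-convergence of $\nabla\G_h$ through the product, giving $a(\G_h;\G_h,\mathcal{I}_h\Phi)\to a(\G;\G,\Phi)$ with no extra bookkeeping.

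For the final step the paper takes a shorter route than your C\'ea splitting: it asserts $\nabla\varphi_h=\G_h$ and $\nabla\varphi=\G$, so that $\|\varphi-\varphi_h\|_{H^1}\le C\|\G-\G_h\|_{L^2}$ directly. Your version, with an additional best-approximation term coming from $V_h$, is a safe variant that does not rely on that identification.
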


\begin{proof}
In the proof of Proposition \ref{th:nonlinear discrete eq}, we have shown that there exists $C > 0$, independent of $h > 0$, such that
\[ \Vert \G_h \Vert_{H^1(\Omega)} \le C \Vert G_D \Vert_{H^\frac12(\partial \Omega)}. \]
By compactness, there exists $\G \in W_D$ such that, up to a subsequence,
\[ \G_h \mathop{\rightharpoonup}_{h \to 0} \G \text{ weakly in } H^1(\Omega). \]
Note that due to Rellich--Kondrachov theorem \cite[Theorem 9.16]{brezis}, one also has, up to a subsequence,
\[ \G_h \mathop{\longrightarrow}_{h \to 0} \G \text{ strongly in } L^2(\Omega). \]
We now show that $\G$ verifies \eqref{eq:gradient eq}.
Let $\Phi \in \mathcal{C}^\infty_c(\Omega)^{3 \times 2}$.
One has
\begin{align*}
\mathsf{p}(\G_h,\mathcal{I}_h \Phi) &= \int_\Omega (\G_{h,y}^x - \G_{h,x}^y) \cdot ((\mathcal{I}_h \Phi^x)_y - (\mathcal{I}_h \Phi^y)_{x}) \\
& \mathop{\longrightarrow}_{h \to 0} \int_\Omega (\G_{y}^x - \G_{x}^y) \cdot (\Phi^x_y - \Phi^y_{x}),
\end{align*}
as $\Phi^x_y - \Phi^y_{x} \in \mathcal{C}^\infty_c(\Omega)^3$ and owing to the weak convergence of $\G_h \rightharpoonup \G$ in $H^1(\Omega)$.
We now test \eqref{eq:linearized discrete inside} with $\mathcal{I}_h \Phi$,
\[ 0 = a(\G_h; \G_h, \mathcal{I}_h \Phi) + \mathsf{p}(\G_h, \mathcal{I}_h \Phi) \mathop{\longrightarrow}_{h \to 0} a(\G;\G,\Phi) + \mathsf{p}(\G, \Phi), \]
owing to the weak convergence of $\G_h \rightharpoonup \G$ in $H^1(\Omega)$, and the strong convergence of $\G_h \to \G$ in $L^2(\Omega)$.
As $G_D \in \mathbb{H}^{\frac12}(\partial \Omega)^{3 \times 2}$, there exists $\hat{G}_D \in H^1(\Omega)^{3 \times 2}$, $\hat{G}_D = G_D$ on $\partial \Omega$ and $\hat{G}_{D,y}^x = \hat{G}_{D,x}^y$.
We define $\hat{\G} := \G -  \hat{G}_D \in H^1_0(\Omega)$ and $f := -\bar{A}(\G) \hat{G}_D \in L^2(\Omega)$.
By density, one has
\[ a(\G;\hat{\G}, \hat{\G}) + \mathsf{p}(\hat{\G}, \hat{\G}) = \int_\Omega f \cdot \bar{A}(\G) \hat{\G}.  \] 
Therefore, $\hat{\G}$ minimizes over $\tilde{G} \in H^1_0(\Omega)$,
\[ \Vert \bar{A}(\G) \tilde{G} - f \Vert^2_{L^2(\Omega)} + \eta \Vert \tilde{G}^x_y -  \tilde{G}^y_x \Vert^2_{L^2(\Omega)}, \]
and thus $\G^x_y = \G^y_x$ a.e. in $\Omega$ and $\bar{A}(\G) \G = 0$ a.e. in $\Omega$.
Finally, $\G \in \mathbb{W}_D$ is a solution of \eqref{eq:gradient eq}.

We define $\varphi$ from $\G$ as in Proposition \ref{th:phi}.
Using a Poincar\'e inequality, there exists $C>0$,
\begin{align*}
\Vert \varphi - \varphi_h \Vert_{H^1(\Omega)} &\le 
C \Vert \nabla \varphi - \nabla \varphi_h \Vert_{L^2(\Omega)}, \\
& \le C\Vert \nabla \varphi - \G \Vert_{L^2(\Omega)} + C\Vert \G - \G_h \Vert_{L^2(\Omega)} + C\Vert \G_h - \nabla \varphi_h \Vert_{L^2(\Omega)}, \\
&= C\Vert \G - \G_h \Vert_{L^2(\Omega)},
\end{align*}
as $\nabla \varphi = \G$, and $\nabla \varphi_h = \G_h$ in $L^2(\Omega)$.
We can now conclude that, up to a subsequence,
\[ \Vert \varphi - \varphi_h \Vert_{H^1(\Omega)} \le C \Vert \G - \G_h \Vert_{L^2(\Omega)} \mathop{\longrightarrow}_{h \to 0} 0. \]
\end{proof}

\begin{remark}[Convergence rate]
\label{rk:convergence rate}
The computation performed in the proof of Theorem \ref{th:convergence rate} shows that $\Vert \varphi - \varphi_h \Vert_{H^1(\Omega)}$ is controlled by $\Vert \mathcal{G} - \mathcal{G}_h \Vert_{L^2(\Omega)}$.
As the bilinear form $a(\G)$ is symmetric, one can wonder if results along the lines of an Aubin--Nitsche trick (in a nonlinear setting),
see \cite{dobrowolski1980finite} for instance, could lead to a convergence rate of order two in $h$.
Numerically, we observe it in Section \ref{sec:num examp}.
\end{remark}

\section{Numerical examples}
\label{sec:num examp}
This paper uses Firedrake \cite{firedrake} to solve \eqref{eq:discrete eq} with a Newton method.
The boundary condition $G_h = \mathcal{I}_h G_D$ on $\partial \Omega$ is imposed strongly.
The stopping criterion based on the relative residual in $H^1$-norm is set to $\varepsilon := 10^{-8}$.
The penalty parameter for $\mathsf{p}$ is taken as $\eta := 1$, unless otherwise specified.
The value of the penalty parameter has been determined by trial and error.
A value of $\eta$ too small impacts the resolution of \eqref{eq:linearized discrete inside} for a lack of coercivity.
On the contrary, a value of $\eta$ too large impacts the convergence of the Newton method as the penalty term becomes dominant in \eqref{eq:discrete eq}.
An initial guess $\G_{h,0} \in W_{hD}$ for the Newton method is computed as the solution of
\[\int_\Omega \nabla \G_{h,0} \cdot \nabla \tilde{G}_h, \quad \forall \tilde{G}_h \in W_{h0}. \]

\subsection{Hyperboloid}
\label{ex:hyper}
This test case comes from \cite{lebee2018fitting}.
The reference solution is
\[\phi(x,y) = \left(\rho(x)\cos(\alpha y), \rho(x) \sin(\alpha y), z(x) \right)^\mathsf{T}, 
\text{ where } 
\left\{ \begin{aligned}
\rho(x) &= \sqrt{4 c_0^2 x^2 + 1}, \\
z(x) &= 2 s_0 x, \\
\alpha &= \left(1 - s_0^2 \right)^{-1/2}, 
\end{aligned} \right. \]
$c_0 = \cos(\frac\theta{2})$, $s_0 = \sin\left(\frac\theta{2}\right)$ and $\theta \in \left(0, \frac{2\pi}3\right)$.
The domain is $\Omega = (-s_0^*,s_0^*) \times (0, \frac{2\pi}\alpha)$, where $s_0^* = \sin\left(0.5\cos^{-1}\left(0.5\cos\left(\frac{\theta}2\right)^{-1}\right)\right)$.
Structured meshes, periodic in y, are used.
This translates into the fact that the dofs on the lines of equation $y=0$ and $y=\frac{2\pi}\alpha$ of $\partial \Omega$ are one and the same, but remain unknown.
$\nabla \phi$ is used as gradient Dirichlet boundary condition on the lines of equations $x=-s_0^*$ and $x=s_0^*$.
A convergence test is performed for $\theta = \frac\pi{2}$.
Table \ref{tab:convergence rate} contains the errors and estimated convergence rates.
\begin{table}[!htp]
\centering
\begin{tabular}{|c|c|c|c|c|c|c|}
\hline
$h$ & nb dofs & nb iterations & $\Vert \G_h - \G\Vert_{H^1}$ & rate & $\Vert \G_h - \G\Vert_{L^2}$ & rate \\ \hline
4.4e-01 & 1,260 & 7 & 3.128e+00 & - & 7.181e-01 & - \\ \hline
2.2e-01 & 4,920 & 3 & 5.028e-01 & 2.68 & 6.284e-02 & 3.57 \\ \hline
1.1e-01 & 19,440 & 3 & 2.347e-01 & 1.11 & 1.491e-02 & 2.09  \\ \hline
5.6e-02 & 77,280 & 3 & 1.156e-01 & 1.03 & 3.686e-03 & 2.03  \\ \hline
2.8e-02 & 308,160 & 3 & 5.757e-02 & 1.01 & 9.190e-04 & 2.01 \\ \hline
1.4e-02 & 1,230,720 & 3 & 2.876e-02 & 1.00 & 2.296e-04 & 2.01 \\ \hline
\end{tabular}
\caption{Hyperboloid: estimated convergence rate and number of Newton iterations.}
\label{tab:convergence rate}
\end{table}
The convergence rate is estimated using the formula \[ 2 \log\left(\frac{e_1}{e_2}\right)  \log\left(\frac{\mathrm{card}(\mathcal{T}_{h_1})}{\mathrm{card}(\mathcal{T}_{h_2})}\right)^{-1}, \]
where $e_1$ and $e_2$ are the errors. 
The number of Newton iterations is also reported.
The results in Table \ref{tab:convergence rate} are compatible with the second order rate announced in Remark \ref{rk:convergence rate}.

\subsection{Annulus} 
\label{sec:annulus}
The domain is $\Omega = \left(0, \frac34\right) \times (0, 2\pi)$.
A structured mesh, periodic in the $y$ direction, is used to mesh $\Omega$.
The mesh has a size $h = 0.251$ and contains $45,900$ dofs.
The gradient Dirichlet boundary conditions are
\[ \left\{ \begin{aligned}
& G_D^x = kx e_r, \\
& G_D^y = \frac4{4-|G_D^x|^2} e_{\theta}, \\
\end{aligned} \right. \]
where $(e_r,e_\theta)$ is the polar basis of $\mathbb{R}^2$ and $k$ is a constant.
We consider two different cases: $k=2.21$ and $k=1.5$.
The resulting surfaces are presented in Figure \ref{fig:annulus}.
\begin{figure}[htp!]
     \centering
     \begin{subfigure}[b]{0.45\textwidth}
         \centering
         \includegraphics[scale=.3]{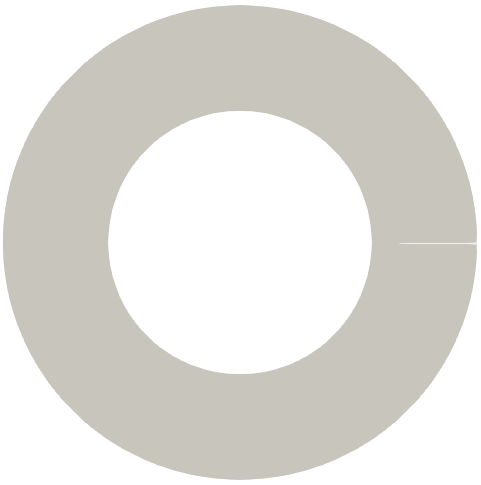}
         \caption{$k=2.21$}
     \end{subfigure}
     \hfill
     \begin{subfigure}[b]{0.45\textwidth}
         \centering
         \includegraphics[scale=.3]{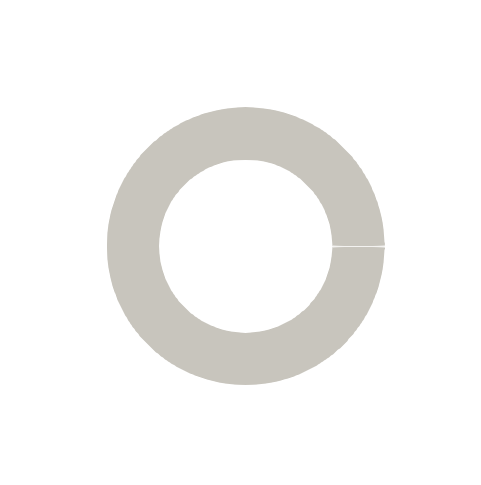}
         \caption{$k=1.5$}
     \end{subfigure}
\caption{Annulus: Computed surfaces.}
\label{fig:annulus}
\end{figure}
Figure \ref{fig:annulus constraint} presents $|\mathcal{G}_{h,y}|$ for both cases.
$|\G_{h,x}|$ is not reported since the inequality constraints \eqref{eq:ineq constraint} on it are verified for both computations.
\begin{figure}[htp!]
     \centering
     \begin{subfigure}[b]{0.45\textwidth}
         \centering
         \includegraphics[scale=.3]{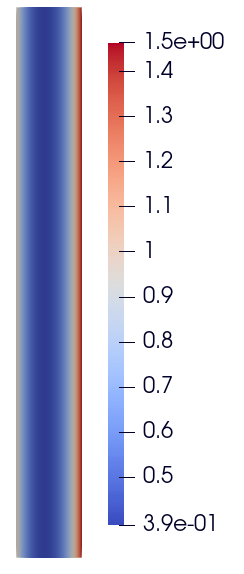}
         \caption{$k=2.21$}
     \end{subfigure}
     \hfill
     \begin{subfigure}[b]{0.45\textwidth}
         \centering
         \includegraphics[scale=.3]{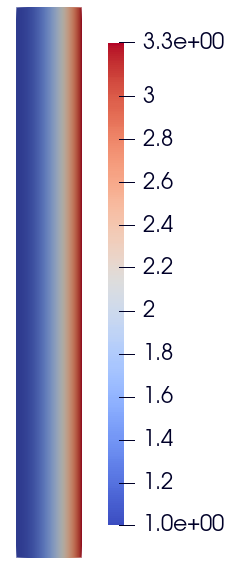}
         \caption{$k=1.5$}
     \end{subfigure}
     \caption{Annulus: $|\G_{h,y}|^2$.}
\label{fig:annulus constraint}
\end{figure}
One can observe that for $k=2.21$, $\Omega' = \Omega$ whereas for $k=1.5$, $\Omega' \subsetneq \Omega$.
Indeed, in a large band in the middle of the domain, one can notice that $|\G_{h,y}| < 1$.
This is confirmed by looking at $v$, see Figure \ref{fig:annulus v}.
\begin{figure}[htp!]
     \centering
     \begin{subfigure}[b]{0.45\textwidth}
         \centering
         \includegraphics[scale=.3]{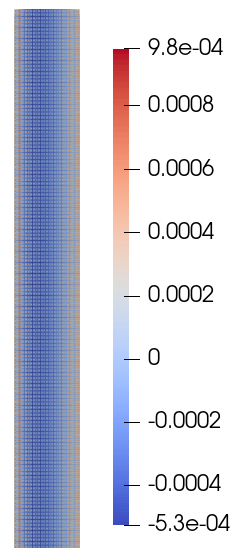}
         \caption{$k=2.21$}
     \end{subfigure}
     \hfill
     \begin{subfigure}[b]{0.45\textwidth}
         \centering
         \includegraphics[scale=.3]{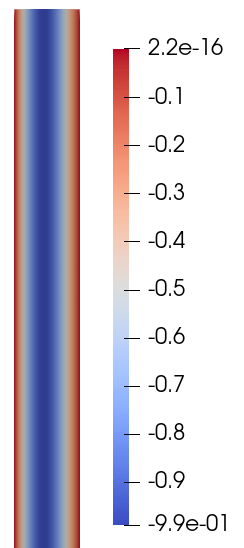}
         \caption{$k=1.5$}
     \end{subfigure}
     \caption{Annulus: $v$.}
\label{fig:annulus v}
\end{figure}
The violation of the constraints in $\Omega \setminus \Omega'$ is interpreted as being the result of having gradient Dirichlet boundary conditions on all of $\partial \Omega$.
Indeed, as stated previously, the constraints are not verified after the pattern fully folds on a curve, which corresponds to $|\varphi_y| = 1$.
As gradient Dirichlet boundary conditions are imposed on all of $\partial \Omega$, the pattern cannot relax in any part of the domain, as would have been possible with a part of the boundary having stress-free boundary conditions.
This leads to an over-constrained pattern and thus to a solution that is not physical in some parts of the domain.

\subsection{Axysymmetric surface}
This example comes from \cite{lebee2018fitting}.
The domain is $(0,2\pi) \times (0,4)$.
A structured mesh, periodic in the $x$ direction, is used.
The mesh has a size $h = 0.126$ and contains $135,750$ dofs.
The reference solution writes 
\[ \phi(x,y) := (\rho(y) \cos(x), \rho(y) \sin(x), z(x)), \]
where $\rho$ is a solution of
\[ \ddot{\rho} = \frac{4 \rho}{(4 - \rho^2)^2}. \]
An explicit Runge--Kutta method of order 5 (see \cite{hairer2006geometric}) is used to integrate this ODE with initial conditions $\rho_0 = 0.1$ and $\dot{\rho}_0 = 0$.
Because the reference solution is not know analytically, the gradient Dirichlet boundary condition is imposed weakly using a least-squares penalty in order to remove numerical issues.
Let $\mathcal{E}_h$ the set of the edges of $\mathcal{T}_h$.
The set $\mathcal{E}_h$ is partitioned as $\mathcal{E}_h := \mathcal{E}_h^i \cup \mathcal{E}_h^b$, where for all $e \in \mathcal{E}_h^b$, $e \subset \partial \Omega$ and $\cup_{e \in \mathcal{E}^b_h} {e} = \partial \Omega$.
Let $h_e := \mathrm{diam}(e)$, where $e \in \mathcal{E}_h$.
Therefore, we search for $\G_h \in W_h$,
\[ a(\G_{h}; \G_{h}, \tilde{G}_h) + \sum_{e \in \mathcal{E}^b_h} \frac\eta{h_e} \int_e \G_h : \tilde{G}_h =  \sum_{e \in \mathcal{E}^b_h} \frac\eta{h_e} \int_e \mathcal{I}_h G_D : \tilde{G}_h , \quad \forall \tilde{G}_h \in W_{h0}. \]
\begin{figure}[!htp]
\centering
\includegraphics[scale=0.25]{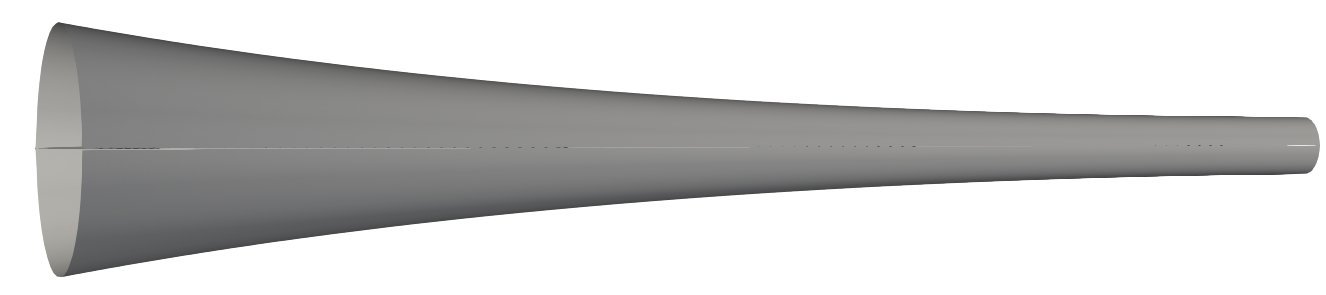}
\caption{Axysymmetric surface: Computed surface.}
\label{fig:axy shape}
\end{figure}
Figure \ref{fig:axy shape} shows the computed surface, which looks as expected in \cite{lebee2018fitting}.

\subsection{Deformed hyperboloid}
\label{sec:deformed}
The domain $\Omega$ is the same as in Section \ref{ex:hyper}.
A structured mesh, periodic in the $y$ direction, is used to mesh $\Omega$.
The mesh has a size $h = 0.044$ and contains $120,600$ dofs.
The gradient Dirichlet boundary conditions are the same as in Section \ref{ex:hyper} for the left-hand side of $\Omega$ and rotated by an angle $\frac\pi3$, with respects to Section \ref{ex:hyper}, on the right-hand side of $\Omega$.
This breaks the axial symmetry that allows one to reduce \eqref{eq:min surface eq} to an ODE.
The penalty parameter is taken as $\eta=10$ for this computation.
The resulting surface is presented in Figure \ref{fig:def hyper}.
\begin{figure}[htp!]
     \centering
     \begin{subfigure}[b]{0.45\textwidth}
         \centering
         \includegraphics[scale=.3]{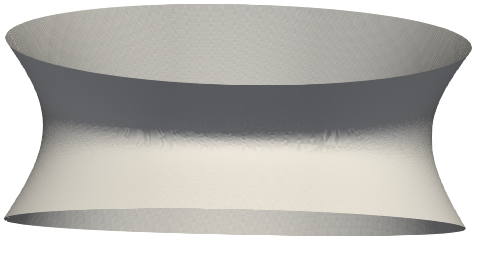}
     \end{subfigure}
     \hfill
     \begin{subfigure}[b]{0.45\textwidth}
         \centering
         \includegraphics[scale=.3]{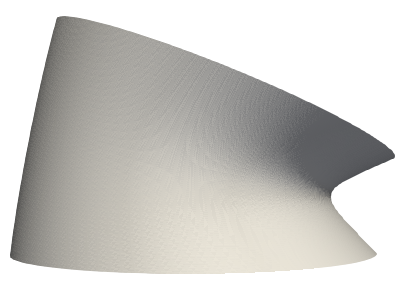}
     \end{subfigure}
     \caption{Deformed hyperboloid: Computed surface.}
\label{fig:def hyper}
\end{figure}
Note that the resulting configuration is quite `far' from the original hyperboloid.
The gradients are reported in Figure \ref{fig:def hyper grad}.
\begin{figure}[htp!]
     \centering
     \begin{subfigure}[b]{0.45\textwidth}
         \centering
         \includegraphics[scale=0.3]{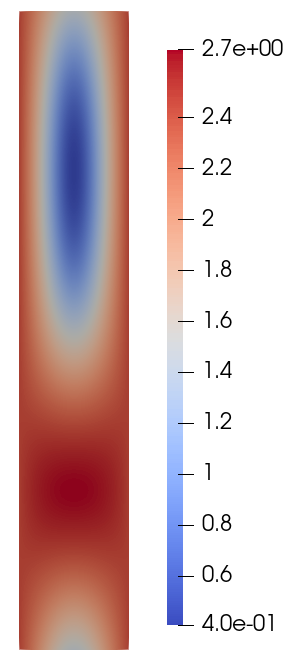}
         \caption{$|\G_{h,x}|^2$}
     \end{subfigure}
     \hfill
     \begin{subfigure}[b]{0.45\textwidth}
         \centering
         \includegraphics[scale=0.3]{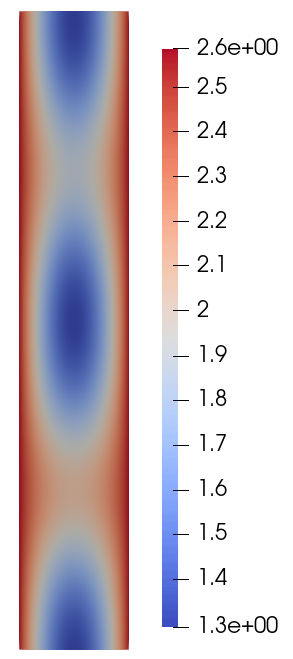}
         \caption{$|\G_{h,y}|^2$}
     \end{subfigure}
     \caption{Deformed hyperboloid.}
\label{fig:def hyper grad}
\end{figure}
Figure \ref{fig:def hyper grad} shows that the inequality constraints \eqref{eq:ineq constraint} are verified in all of $\Omega$ which suggests that \eqref{eq:local basis} and \eqref{eq:microstructure} will also be verified in all of $\Omega$ because of Theorem \ref{th:general}.
This is confirmed by looking at Figure \ref{fig:def hyper constraint}.
\begin{figure}[htp!]
     \centering
     \begin{subfigure}[b]{0.45\textwidth}
         \centering
         \includegraphics[scale=.3]{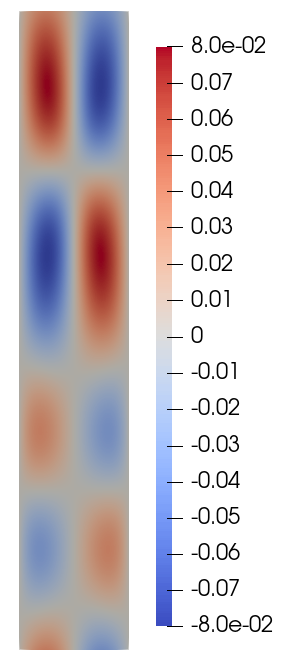}
         \caption{$u$}
     \end{subfigure}
     \hfill
     \begin{subfigure}[b]{0.45\textwidth}
         \centering
         \includegraphics[scale=.3]{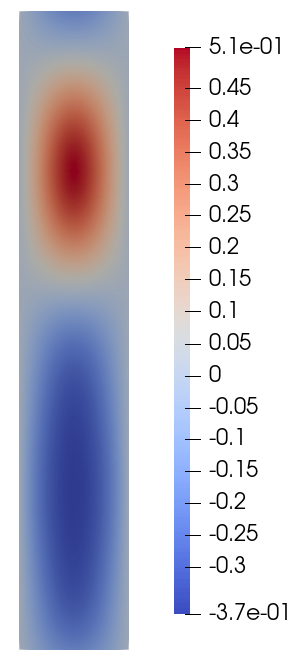}
         \caption{$v$}
     \end{subfigure}
     \caption{Deformed hyperboloid.}
\label{fig:def hyper constraint}
\end{figure}
One can see that $u$ and $v$ are small in all of $\Omega$.

\section{Conclusion}
In this paper, the existence of solutions to the constrained system of equations describing a Miura surface are proved under some hypotheses.
Then, a numerical method based on a stabilized formulation, $\mathbb{P}^1$--Lagrange elements, and a Newton method is introduced to approximate Miura surfaces.
The method is proved to converge in $H^1$-norm in the space discretization parameter $h$ and a convergence order of two is observed in practice.
Some numerical tests are performed and show the robustness of the method.
Future work includes studying the constrained nonlinear hyperbolic PDE derived by homogenizing the eggbox pattern, as in \cite{nassar2017curvature}.

\section*{Code availability}
The code is available at \url{https://github.com/marazzaf/Miura.git}

\section*{Acknowledgment}
The author would like to thank Hussein Nassar from University of Missouri for stimulating discussions that lead to the content of this paper.
The author would also like to thank Zhaonan Dong and Alexandre Ern from Inria Paris for stimulating discussions.
Finally, the author would like to thank the patient referees for their careful and informative comments.

\section*{Funding}
This work is supported by the US National Science Foundation under grant number OIA-1946231 and the Louisiana Board of Regents for the Louisiana Materials Design Alliance (LAMDA).

\bibliographystyle{plain}
\bibliography{bib}

\end{document}